\theoremstyle{plain}
\newtheorem{thm}{Theorem}[section]
\newtheorem{cor}[thm]{Corollary}
\theoremstyle{definition}
\newtheorem{rem}[thm]{Remark}
\newtheorem{defi}[thm]{Definition}
\newtheorem{conv}[thm]{Convention}
\numberwithin{equation}{section}
\def\loc{\operatorname{loc}}
\def\esup{\operatornamewithlimits{ess\,sup}}
\def\Id{\operatorname{I}}
\def\ap{\approx}
\def\qq{\qquad}
\def\rw{\rightarrow}
\def\ls{\lesssim}
\def\gs{\gtrsim}
\def\hra{\hookrightarrow}
\def\M{\mathcal M}
\def\a{\alpha}
\def\b{\beta}
\def\o{\omega}
\def\la{\lambda}
\def\vp{\varphi}
\def\i{\infty}
\def\I{(0,\i)}
\def\t{\theta}
\def\N{\mathbb N}
\def\R{\mathbb R}
\def\R{\mathbb R}
\def\M{\mathfrak M}
\def\W{W}
\def\mp{{\mathfrak M}}
\def\rn{\R^n}
\def\a{\alpha}
\def\b{\beta}
\def\o{\omega}
\def\O{\Omega}
\def\la{\lambda}
\def\vp{\varphi}
\def\i{\infty}
\def\I{(0,\i)}
\def\t{\theta}
\def\dual{\,^{^{\mathsf{c}}}\!}
\def\Btd {\dual\Bt}
\def\Brd {\dual\Br}
\def\Bt {{B(0,t)}}
\def\Bxt {{B(x,t)}}
\def\Br {{B(0,r)}}
\def\Bxr {{B(x,r)}}
\def\LM {LM_{p\t,\o}}
\def\LMb {LM_{p\t_1,\o_1}(\rn)}
\def\LMi {LM_{p\t_2,\o_2}(\rn)}
\def\LMd {\dual{\LM}}
\def\LM {LM_{p\t,\o}}
\def\LMv {LM_{p\t,\o}(\rn,v)}
\def\LMd {\dual{\LM}}
\def\LMvd {\dual{\LMv}}
\def\Ot {\Omega_{\t}}
\def\Otd {\dual{\Ot}}
\begin{document}

\title[]
{Embeddings between weighted local Morrey-type spaces and weighted Lebesgue spaces}

\author{R.Ch.~Mustafayev, T.~{\"U}nver}

\thanks{We would like to thank Professor V.I.~Burenkov for making available the reference \cite{BurGol} in preprint form.}

\begin{abstract}
In this paper, the embeddings between weighted local Morrey-type
spaces and weighted Lebesgue spaces are investigated.
\end{abstract}

\keywords{local Morrey-type spaces; weighted Lebesgue spaces;
Hardy-type inequalities}

\subjclass[2000]{42B35, 47B38}

\maketitle

\section{Introduction}\label{in}

Throughout the paper, we always denote by $c$ and $C$ a positive
constant, which is independent of main parameters but it may vary
from line to line. However a constant with subscript or
superscript such as $c_1$ does not change in different
occurrences. By $a\lesssim b$, ($b\gtrsim a$) we mean that $a\leq
\la b$, where $\la >0$ depends on inessential parameters. If
$a\lesssim b$ and $b\lesssim a$, we write $a\approx b$ and say
that $a$ and $b$ are equivalent.  We will denote by $\bf 1$ the
function ${\bf 1}(x) = 1$, $x \in \rn$. For $x\in\rn$ and $r>0$,
let $\Bxr$ be the open ball centered at $x$ of radius $r$ and
$\dual{\Bxr}:=\rn\backslash \Bxr$.

Let $A,\,B$ be some sets and $\vp,\,\psi$ be non-negative
functions defined on $A \times B$ (It may happen that $\vp (\a,\b)
= \i$ or $\psi (\a,\b) = \i$ for some $\a \in A$, $\b \in B$). We
say that $\vp$ is dominated by $\psi$ (or $\psi$ dominates $\vp$)
on $A \times B$ uniformly in $\a \in A$ and write
$$
\vp (\a,\b) \ls \psi (\a,\b) \qq \mbox{uniformly in} \qq \a \in A
$$
or
$$
\psi (\a,\b) \gs \vp (\a,\b) \qq \mbox{uniformly in} \qq \a \in A,
$$
if for each $\b \in B$ there exists $C(\b) > 0$ such that
$$
\vp (\a,\b) \le C(\b) \psi (\a,\b)
$$
for all $\a \in A$. We also say that $\vp$ is equivalent to $\psi$
on $A \times B$ uniformly in $\a \in A$ and write
$$
\vp (\a,\b) \ap \psi (\a,\b) \qq \mbox{uniformly in} \qq \a \in A,
$$
if $\vp$ and $\psi$ dominate each other on $A \times B$ uniformly
in $\a \in A$ (see, for instance, \cite{BurGol}).

Given two quasi-normed vector spaces $X$ and $Y$, we write $X=Y$
if $X$ and $Y$ are equal in the algebraic and the topological
sense (their quasi-norms are equivalent). The symbol
$X\hookrightarrow Y$ ($Y \hookleftarrow X$) means that $X\subset
Y$ and the natural embedding $\Id$ of $X$ in $Y$ is continuous,
that is, there exist a constant $c > 0$ such that $\|z\|_Y \le
c\|z\|_X$ for all $z\in X$. The best constant of the embedding
$X\hookrightarrow Y$ is $\|\Id\|_{X \rw Y}$.

Let $A$ be any measurable subset of $\rn$, $n \ge 1$. By $\mp (A)$
we denote the set of all measurable functions on $A$. The symbol
$\mp^+ (A)$ stands for the collection of all $f\in\mp (A)$ which
are non-negative on $A$. The family of all weight functions (also
called just weights) on $A$, that is, measurable, positive and
finite a.e. on $A$, is given by $\W (A)$.

For $p\in (0,\i]$ and $w\in \mp^+(A)$, we define the functional
$\|\cdot\|_{p,A,w}$ on $\mp (A)$ by
\begin{equation*}
\|f\|_{p,A,w} : = \bigg\{\begin{array}{cl}
                             \left(\int_A |f(x)|^p w(x)\,dx \right)^{1/p} & \qq\mbox{if}\qq p<\i \\
                             \esup_{A} |f(x)|w(x) & \qq\mbox{if}\qq p=\i
                           \end{array}
\bigg..
\end{equation*}

If, in addition, $w\in \W(A)$, then the weighted Lebesgue space
$L_p(A,w)$ is given by
\begin{equation*}
L_{p,w}(A) \equiv L_p(A,w) : = \{f\in \mp (A):\,\, \|f\|_{p,A,w} <
\i\}
\end{equation*}
and it is equipped with the quasi-norm $\|\cdot\|_{p,A,w}$.

When $w\equiv 1$ on $A$, we write simply $L_p(A)$ and
$\|\cdot\|_{p,A}$ instead of $L_p(A,w)$ and $\|\cdot\|_{p,A,w}$,
respectively.

We adopt the following usual conventions.
\begin{conv}\label{Notat.and.prelim.conv.1.1}
{\rm (i)} Throughout the paper we put $0/0 = 0$, $0 \cdot (\pm \i)
= 0$ and $1 / (\pm\i) =0$.

{\rm (ii)} We put
$$
p' : = \left\{\begin{array}{cl} \frac p{1-p} & \text{if} \quad 0<p<1,\\
                      +\infty &\text{if}\quad p=1, \\
                   \frac p{p-1}  &\text{if}\quad 1<p<+\infty,\\
          1  &\text{if}\quad p=+\infty.
\end{array}
\right.
$$

{\rm (iii)} If $I = (a,b) \subseteq \R$ and $g$ is a monotone
function on $I$, then by $g(a)$ and $g(b)$ we mean the limits
$\lim_{x\rw a+}g(x)$ and $\lim_{x\rw b-}g(x)$, respectively.
\end{conv}

Morrey-type spaces, appeared to be quite useful in the study of
the local behavior of the solutions to partial differential
equations, a priori estimates and other topics in the theory of
PDE, were widely investigated during last decades. On the one
hand, the research includes the study of classical operators of
Harmonic Analysis - maximal, singular and potential operators - in
these spaces (see, for instance, \cite{Gul} - \cite{GulMus1},
\cite{BurHus0} - \cite{BurJainTar}, \cite{BurGol} and
\cite{Bur1}), on the other hand, the functional-analytic
properties of Morrey-type spaces and relation of these spaces with
other known function spaces are studied (see, for instance,
\cite{BurNur}, \cite{GogMus1}, \cite{GogMus3}).

Let us recall definitions of weighted local Morrey-type spaces and
weighted complementary local Morrey-type spaces.
\begin{defi}\label{defi.2.1}
Let  $0 <p, \theta \le \infty$, $\o \in \mp^+ \I$ and $v \in
\W(\rn)$. We denote by $\LMv$ the weighted local Morrey-type
space, the set of all $f\in L_{p,v}^{\loc}(\rn)$ with
$$
\left\| f\right\|_{\LMv} : = \left\| \o(r) \|f\|_{p,v,\Br}
\right\|_{\t,(0,\infty)} < \i.
$$
\end{defi}

\begin{defi}\label{defi.2.2}
Let  $0 <p, \theta \le \infty$, $\o \in \mp^+ \I$ and $v \in
\W(\rn)$. We denote by $\LMvd$ the weighted complementary local
Morrey-type space, the set of all functions such that $f\in
L_{p,v}(\Btd)$ for all $t>0$ with
$$
\left\| f\right\|_{\LMvd} : = \left\| \o(r) \|f\|_{p,v,\Brd}
\right\|_{\t,(0,\infty)} < \i.
$$
\end{defi}
\begin{rem}
In \cite{BurHus1} and \cite{BurGulHus1} it were proved that the
spaces $\LM (\rn) : = \LM (\rn, \bf 1)$ and $\LMd (\rn) : = \LMd
(\rn, \bf 1)$ are non-trivial, i.e. consists not only of functions
equivalent to $0$ on $\rn$, if and only if
\begin{equation}\label{11111}
\|\o\|_{\t,(t,\i)} < \i, \qq \mbox{for some} \qq t >0,
\end{equation}
and
\begin{equation}\label{1111111}
\|\o\|_{\t,(0,t)} < \i,\qq \mbox{for some} \qq t >0,
\end{equation}
respectively. The same conclusion can be drawn for $\LMv$ and
$\LMvd$ for any $v \in \W(\rn)$.
\end{rem}
\begin{defi}
Let $0<p,\t\leq \i$. We denote by $\Ot$ the set all functions $\o
\in \mp^+ \I$ such that
$$
0<\|\o\|_{\t,(t,\i)}<\i,~~ t>0,
$$
and by $\Otd$ the set all functions $\o \in \mp^+ \I$ such that
$$
0<\|\o\|_{\t,(0,t)}<\i,~~ t>0.
$$
\end{defi}
Let $v \in \W(\rn)$. It is easy to see that $\LMv$ and $\LMvd$ are
quasi-normed vector spaces when $\o \in \Ot$ and $\o \in \Otd$,
respectively.

We recall that  $\LMv$ and $\LMvd$ coincide with some weighted
Lebesgue spaces.
\begin{thm}\label{thm.2.7.}
Let $1\le p < +\infty$, $\o \in \O_{p}$, $v \in \W(\rn)$. Then
$$
LM_{pp,\o}(\rn,v) = L_{p}(\rn,u),
$$
and norms are equivalent, where
$$
u(x) = v(x) \|\o\|_{p,(|x|,\i)}^p.
$$
\end{thm}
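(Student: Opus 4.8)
The plan is to unravel both quasi-norms using Fubini's theorem for the case $\theta = p$, which is precisely what makes the two iterated $L_p$-type expressions collapse into a single integral. Concretely, for $f \in \mp(\rn)$ we write
\begin{align*}
\|f\|_{LM_{pp,\o}(\rn,v)}^p
&= \int_0^\infty \o(r)^p \left( \int_{\Br} |f(x)|^p v(x)\,dx \right) dr \\
&= \int_0^\infty \o(r)^p \left( \int_{\rn} |f(x)|^p v(x)\, \chi_{\{|x|<r\}}\,dx \right) dr,
\end{align*}
and then interchange the order of integration (legitimate since the integrand is non-negative and measurable on $\I \times \rn$). The inner condition $|x| < r$ becomes $r > |x|$, so the $r$-integral produces exactly $\int_{|x|}^\infty \o(r)^p\,dr = \|\o\|_{p,(|x|,\i)}^p$, and we obtain
$$
\|f\|_{LM_{pp,\o}(\rn,v)}^p = \int_{\rn} |f(x)|^p v(x) \|\o\|_{p,(|x|,\i)}^p\,dx = \|f\|_{p,\rn,u}^p
$$
with $u(x) = v(x)\|\o\|_{p,(|x|,\i)}^p$ as claimed. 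This is in fact an \emph{identity} of the $p$-th powers, so the norms are not merely equivalent but equal, and the two spaces coincide as sets.

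The remaining points to address are mostly bookkeeping. First, one should note that $u$ is a genuine weight: since $\o \in \O_p$ we have $0 < \|\o\|_{p,(t,\i)} < \i$ for every $t > 0$, hence $0 < \|\o\|_{p,(|x|,\i)}^p < \i$ for a.e. $x$ (all $x \neq 0$, and the origin is a null set), and $v \in \W(\rn)$; so $u \in \W(\rn)$ and $L_p(\rn,u)$ is well-defined. Second, one should check the membership statement at the level of the defining conditions: $f \in L_p(\rn,u)$ iff the integral above is finite, and the Fubini computation shows this is equivalent to $\|f\|_{LM_{pp,\o}(\rn,v)} < \i$, which in particular forces $\|f\|_{p,v,\Br} < \i$ for a.e.\ $r$ (and then, since $\|f\|_{p,v,\Br}$ is non-decreasing in $r$, for all $r > 0$), so that $f \in L_{p,v}^{\loc}(\rn)$ and $f$ legitimately belongs to $LM_{pp,\o}(\rn,v)$ in the sense of Definition~\ref{defi.2.1}.

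I do not expect any serious obstacle here: the case $\theta = p$ is exactly the degenerate one where the nested structure of the Morrey-type norm disappears, and the whole argument is a single application of the Tonelli–Fubini theorem. The only mild subtlety worth a sentence is the monotonicity remark used to pass from ``a.e.\ $r$'' to ``all $r > 0$'' in identifying the local integrability condition, and the observation that the exceptional point $x = 0$ (where $\|\o\|_{p,(|x|,\i)}$ could a priori be infinite) has Lebesgue measure zero and therefore does not affect any of the integrals or the weight condition.
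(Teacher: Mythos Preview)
Your Fubini argument is correct and is in fact the standard proof of this identity; as you observe, it even gives equality of the norms rather than mere equivalence. The paper itself does not supply a proof: it simply recalls the result and notes that the unweighted case $v = {\bf 1}$ was established in \cite{GulMus1}, so there is nothing to compare your argument against beyond noting that the same Fubini computation underlies that reference as well.
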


\begin{thm}\label{thm.2.7.00}
Let $1\le p < +\infty$, $\o \in \dual{\O}_{p}$, $v \in \W(\rn)$.
Then
$$
\dual{LM}_{pp,\o}(\rn,v) = L_{p}(\rn,u),
$$
and norms are equivalent, where
$$
u(x) = v(x) \|\o\|_{p,(0,|x|)}^p.
$$
\end{thm}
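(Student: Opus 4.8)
The statement to prove is Theorem \ref{thm.2.7.00}, which is the "complementary" analogue of Theorem \ref{thm.2.7.}. The plan is to mimic the proof of Theorem \ref{thm.2.7.}, with balls $\Br$ replaced by their complements $\Brd = \rn\setminus\Br$ and with the weight $\|\o\|_{p,(|x|,\i)}^p$ replaced by $\|\o\|_{p,(0,|x|)}^p$. The core identity is Fubini's theorem: for $f \in \mp^+(\rn)$,
\begin{equation*}
\|f\|_{\dual{LM}_{pp,\o}(\rn,v)}^p
= \int_0^\i \o(r)^p \Big(\int_{\Brd} |f(x)|^p v(x)\,dx\Big)\,dr
= \int_0^\i \o(r)^p \Big(\int_{\{|x|>r\}} |f(x)|^p v(x)\,dx\Big)\,dr.
\end{equation*}
Switching the order of integration in the double integral over $\{(r,x) : r>0,\ |x|>r\}$, the inner variable $r$ ranges over $(0,|x|)$, so the right-hand side equals
\begin{equation*}
\int_{\rn} |f(x)|^p v(x) \Big(\int_0^{|x|} \o(r)^p\,dr\Big)\,dx
= \int_{\rn} |f(x)|^p v(x) \|\o\|_{p,(0,|x|)}^p\,dx
= \|f\|_{L_p(\rn,u)}^p,
\end{equation*}
with $u(x) = v(x)\|\o\|_{p,(0,|x|)}^p$ exactly as in the statement. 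This already gives the set-theoretic equality of the two spaces together with equality (not merely equivalence) of the $p$-th powers of the norms when $p<\i$; the claim "norms are equivalent" then holds with constant $1$.

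First I would check that $u$ is a genuine weight, i.e. $u \in \W(\rn)$: since $\o \in \dual{\O}_p$ we have $0 < \|\o\|_{p,(0,t)} < \i$ for every $t>0$, hence $0 < \|\o\|_{p,(0,|x|)}^p < \i$ for a.e. $x$ (all $x\ne 0$), and as $v \in \W(\rn)$ the product $u$ is positive and finite a.e.; this is what makes $L_p(\rn,u)$ a well-defined weighted Lebesgue space. Next I would record the Fubini step carefully for $f\in\mp^+(\rn)$ — the integrand $(r,x)\mapsto \o(r)^p|f(x)|^p v(x)\mathbf{1}_{\{|x|>r\}}$ is non-negative and measurable on $\I\times\rn$, so Tonelli applies without any integrability hypothesis, and the computation above is valid in $[0,\i]$. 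For general $f\in\mp(\rn)$ one applies this to $|f|$. This simultaneously shows $f\in\dual{LM}_{pp,\o}(\rn,v)$ iff $f\in L_p(\rn,u)$ and that the norms agree.

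The only place that needs a little care — and the main (modest) obstacle — is the definition of the complementary space: its elements are required a priori only to satisfy $f\in L_{p,v}(\Btd)$ for every $t>0$, which is automatic once the Fubini identity is in hand but should be remarked on so that the two "ambient" classes of functions being compared are seen to coincide. One should also note the endpoint behaviour near $x=0$: since $\|\o\|_{p,(0,r)}\to 0$ as $r\to 0+$, the weight $u$ may vanish in the limit at the origin, but it is still positive for all $x\ne0$, so this causes no problem (a single point is null). Finally, I would observe that the argument uses $1\le p<\i$ only in that it is the range in which the statement is phrased and in which $\|\cdot\|_{p,v,\Brd}$ is a genuine norm; the Fubini computation itself is an exact identity valid for all $0<p<\i$. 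Thus the proof reduces to: (i) verify $u\in\W(\rn)$; (ii) apply Tonelli to interchange the order of integration; (iii) read off the equality of norms and hence of the spaces.
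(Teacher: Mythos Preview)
Your argument is correct. The paper itself does not supply a proof of this theorem: immediately after stating Theorems~\ref{thm.2.7.} and~\ref{thm.2.7.00} it only remarks that they ``were proved in \cite{GulMus1} when $v = {\bf 1}$'', so there is no in-paper proof to compare against. Your Tonelli computation is exactly the standard (and essentially the only reasonable) route, and it gives not just equivalence but equality of norms.

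Two very minor remarks. First, since $\Br$ is the \emph{open} ball, $\Brd=\{x:|x|\ge r\}$, so the indicator should be $\mathbf 1_{\{|x|\ge r\}}$ rather than $\mathbf 1_{\{|x|>r\}}$; of course this changes nothing in the Lebesgue integrals. Second, your check that $f\in L_p(\rn,u)$ forces $f\in L_{p,v}(\Btd)$ for every $t>0$ can be made explicit in one line: for $|x|\ge t$ one has $\|\o\|_{p,(0,|x|)}^p\ge\|\o\|_{p,(0,t)}^p>0$, whence $\|f\|_{p,v,\Btd}^p\le\|\o\|_{p,(0,t)}^{-p}\|f\|_{p,u,\rn}^p$. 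With that, the set-theoretic equality of the spaces and the equality of norms are both established.
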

Note that Theorems \ref{thm.2.7.} and \ref{thm.2.7.00} were proved
in \cite{GulMus1}
% for $\LMv$ and  $\LMvd$,
when $v = \bf 1$.

Let $f \in L_1^{\loc}(\rn)$. The maximal operator $M$ is defined
for all $x \in \rn$ by
$$
Mf(x) : = \sup_{t > 0} \frac{1}{|\Bxt|} \int_{\Bxt} |f(y)|\,dy,
$$
where $|\Bxt|$ is the Lebesgue measure of the ball $\Bxt$.

The boundedness of the maximal operator from $\LMb$ to $\LMi$ for
general $\o_1$ and $\o_2$ was studied in \cite{BurHus0},
\cite{BurHus1}, \cite{BGGM} and \cite{BurGol}. In \cite{BurHus0},
\cite{BurHus1}, \cite{BGGM}, for a certain range of the parameters
$p,\,\t_1$ and $\t_2$, necessary and sufficient conditions on
$\o_1$ and $\o_2$ were obtained ensuring the boundedness of $M$
from $\LMb$ to $\LMi$, namely the following statement was proved.
\begin{thm}\label{BurGul.Theorem}
If $n \in \N$, $1 < p < \i$, $0 < \t_1 \le \t_2 \le \i$, $\o_1 \in
\O_{\t_1}$, and $\o_2 \in \O_{\t_2}$, then the condition
\begin{equation}\label{BurGul.condition}
\bigg\| \o_2(r) \bigg( \frac{r}{t+r}\bigg)^{n/p} \bigg\|_{\t_2,\I}
\ls \|\o_1\|_{\t_1,(t,\i)}
\end{equation}
uniformly in $t \in \I$ is necessary and sufficient for the
boundedness of $M$ from $\LMb$ to $\LMi$. Moreover,
\begin{equation*}
\|M\|_{\LMb \rw\LMi} \ap \sup_{t \in \I} \|\o_1\|_{\t_1,(t,\i)}^{-1}
\bigg\| \o_2(r) \bigg( \frac{r}{t+r}\bigg)^{n/p} \bigg\|_{\t_2,\I}
\end{equation*}
uniformly in $\o_1 \in \O_{\t_1}$ and $\o_2 \in \O_{\t_2}$.
\end{thm}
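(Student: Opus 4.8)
The plan is to reduce the boundedness of $M$ to a one-dimensional weighted inequality for a supremal Hardy-type operator acting on the cone of non-decreasing functions, and then to read off condition~\eqref{BurGul.condition} by testing. The first step is the local upper estimate
$$
\|Mf\|_{p,\Br}\ls\sup_{s>0}\Big(\frac{r}{r+s}\Big)^{n/p}\|f\|_{p,B(0,s)}\qq\text{uniformly in }r>0\text{ and }f.
$$
To prove it I fix $r$ and split $f=f\chi_{B(0,2r)}+f\chi_{\rn\setminus B(0,2r)}$. The near part is handled by the $L_p(\rn)$-boundedness of $M$, which is exactly where $1<p<\i$ enters: $\|M(f\chi_{B(0,2r)})\|_{p,\Br}\ls\|f\|_{p,B(0,2r)}$. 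For the far part, any ball $B(x,\rho)$ with $x\in\Br$ that meets $\rn\setminus B(0,2r)$ satisfies $\rho>r$ and $B(x,\rho)\subset B(0,2\rho)$, so its average is $\ls\rho^{-n}\int_{B(0,2\rho)}|f|\ls\rho^{-n/p}\|f\|_{p,B(0,2\rho)}$ by H\"older's inequality; since this bound is independent of $x\in\Br$, taking the $L_p(\Br)$ norm yields $\ls r^{n/p}\sup_{\rho>r}\rho^{-n/p}\|f\|_{p,B(0,2\rho)}$. As $(r/(r+s))^{n/p}\ap1$ for $s\ap r$ and $\ap(r/s)^{n/p}$ for $s\gs r$, after reindexing both contributions are dominated by the asserted supremum.

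Writing $F(s):=\|f\|_{p,B(0,s)}$, which is non-decreasing, the sufficiency part of the theorem follows once the supremal inequality
$$
\Big\|\o_2(r)\sup_{s>0}\Big(\frac{r}{r+s}\Big)^{n/p}F(s)\Big\|_{\t_2,\I}\ls\|\o_1 F\|_{\t_1,\I}
$$
is established for every non-decreasing $F\in\mp^+\I$. \textbf{This is the main obstacle.} I would attack it through the reduction theorems for supremal operators on the cone of monotone functions in the spirit of \cite{BurGol}: since $0<\t_1\le\t_2\le\i$, such an inequality is characterized by a single uniform testing condition, obtained by inserting the extremal non-decreasing test functions $F=\chi_{(t,\i)}$, $t>0$. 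For $F=\chi_{(t,\i)}$ one has $\sup_{s>0}(r/(r+s))^{n/p}\chi_{(t,\i)}(s)=(r/(r+t))^{n/p}$ and $\|\o_1 F\|_{\t_1,\I}=\|\o_1\|_{\t_1,(t,\i)}$, so the tested inequality is \emph{precisely} \eqref{BurGul.condition}; the reduction theorem then upgrades this family to sufficiency for all monotone $F$, giving $\|M\|_{\LMb\rw\LMi}\ls\sup_{t\in\I}\|\o_1\|_{\t_1,(t,\i)}^{-1}\big\|\o_2(r)(r/(t+r))^{n/p}\big\|_{\t_2,\I}$.

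For necessity and the matching lower bound I would test the operator itself on the annuli $f_t:=\chi_{B(0,2t)\setminus B(0,t)}$, $t>0$. Since $\|f_t\|_{p,\Br}=0$ for $r\le t$ and $\|f_t\|_{p,\Br}\ls t^{n/p}$ for all $r$, one gets $\|f_t\|_{\LMb}\ls t^{n/p}\|\o_1\|_{\t_1,(t,\i)}$; here the choice of an annulus, rather than a ball, is essential, as it annihilates the contribution over $(0,t)$. On the other hand $\int_{\rn}|f_t|\ap t^n$ forces $Mf_t(x)\gs(t/(t+|x|))^n$, whence $\|Mf_t\|_{p,\Br}\gs t^{n/p}(r/(t+r))^{n/p}$ and $\|Mf_t\|_{\LMi}\gs t^{n/p}\big\|\o_2(r)(r/(t+r))^{n/p}\big\|_{\t_2,\I}$. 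Dividing the two estimates gives, for each $t>0$,
$$
\|M\|_{\LMb\rw\LMi}\gs\|\o_1\|_{\t_1,(t,\i)}^{-1}\Big\|\o_2(r)\Big(\frac{r}{t+r}\Big)^{n/p}\Big\|_{\t_2,\I},
$$
so taking the supremum over $t$ and combining with the sufficiency bound yields both the necessity of \eqref{BurGul.condition} and the claimed equivalence for $\|M\|_{\LMb\rw\LMi}$.

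I expect the genuine difficulty to lie entirely in the one-dimensional supremal inequality on the monotone cone, namely in justifying that the single family $\{\chi_{(t,\i)}\}_{t>0}$ characterizes it under the assumption $\t_1\le\t_2$ (for $\t_1>\t_2$ one would instead obtain an integral condition). The local estimate, though technical, is routine geometry together with the $L_p$-boundedness of $M$, and the test-function computations are direct.
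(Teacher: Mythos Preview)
The paper does not prove this theorem at all: it is quoted in the introduction as a known result, with the attribution ``In \cite{BurHus0}, \cite{BurHus1} this was proved under the additional assumption $\t_1 \le p$. The general case was considered in \cite{BGGM}.'' There is therefore no proof in the paper to compare your attempt against.

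That said, your outline matches the strategy actually used in those references. The pointwise-to-local estimate $\|Mf\|_{p,\Br}\ls\sup_{s>0}(r/(r+s))^{n/p}\|f\|_{p,B(0,s)}$ via the near/far splitting is precisely the key lemma of \cite{BurHus1}; the reduction of $\|M\|_{\LMb\rw\LMi}$ to a supremal Hardy-type inequality for the non-decreasing function $F(s)=\|f\|_{p,B(0,s)}$ is the route taken in \cite{BGGM}; and your necessity argument with annular test functions $\chi_{B(0,2t)\setminus B(0,t)}$ is also standard there. You have correctly isolated the one nontrivial step you do not prove: that for $\t_1\le\t_2$ the inequality $\big\|\o_2(r)\sup_{s>0}(r/(r+s))^{n/p}F(s)\big\|_{\t_2,\I}\ls\|\o_1 F\|_{\t_1,\I}$ on the cone of non-decreasing $F$ is characterized by testing on $F=\chi_{(t,\i)}$. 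This is exactly the substance of the argument in \cite{BGGM}, and it is not a consequence of anything in the present paper (in particular not of \cite{BurGol}, which treats the rather different situation $\t_1=\i$, $\o_1\equiv 1$). So your sketch is faithful to the original sources, but the step you flag as ``the main obstacle'' remains an appeal to an external black box rather than a self-contained argument.
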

In \cite{BurHus0}, \cite{BurHus1} this was proved under the
additional assumption $\t_1 \le p$. The general case was
considered in \cite{BGGM}.

If $\t_2 < \t_1$, then sufficient conditions on $\o_1$ and $\o_2$
for the boundedness of $M$ from $\LMb$ to $\LMi$ are given in
\cite{BGGM}. However, the problem of finding necessary and
sufficient condition on $\o_1$ and $\o_2$ ensuring the boundedness
of $M$ from $\LMb$ to $\LMi$ for the case $\t_2 < \t_1$ is still
open. In \cite{BurGol} the solution of this problem is given for
very particular case in which $\t_1 = \i$ and $\o_1 (r) \equiv 1$.
In other words, for all admissible values of the parameters $p_1$,
$p_2$ and $\t$ authors find necessary and sufficient conditions on
$\o$ ensuring the boundedness of the  maximal operator from
$L_{p_1}(\rn) = LM_{p_1 \i, \bf 1}(\rn)$ to $LM_{p_2 \t,
\o}(\rn)$.
\begin{thm}[\cite{BurGol}, see
also \cite{Bur1}]\label{BurGol}
Let $n \in \N$, $0 < p_2 \le p_1
\le \i$, $0 < \t \le \i$, and $\o \in \O_{\t}$.

{\rm 1.} If $1 < p_2 = p_1$, $0 < \t \le \i$ or $0 < p_2 < p_1$,
$1 < p_1$, $\t = \i$, then
\begin{equation}\label{BurGol.eq.1}
\|M\|_{L_{p_1}(\rn) \rw LM_{p_2 \t, \o}(\rn)} \ap
\left\|r^{n(1/{p_2} - 1 / {p_1})}\o(r)\right\|_{\t,\I}
\end{equation}
uniformly in $\o \in \O_{\t}$.

In particular, if $1 < p \le \i$, $0 < \t \le \i$. then
$$
\|M\|_{L_{p}(\rn) \rw LM_{p \t, \o}(\rn)} \ap
\left\|\o(r)\right\|_{\t,\I}
$$
uniformly in $\o \in \O_{\t}$.

{\rm 2.} If $0 < p_2 < p_1$, $1 < p_1$ and $\t < \i$, then
\begin{align}
\|M\|_{L_{p_1}(\rn) \rw LM_{p_2 \t, \o}(\rn)} & \ap
\left\|t^{n(1/{p_2} - 1 / {p_1}) -
1/s}\|\o\|_{\t,(t,\i)}\right\|_{s,\I}
\label{BurGol.eq.2}\\
& \ap \bigg\|t^{n(1/{p_2} - 1 / {p_1}) -
1/s}\bigg\|\bigg(\frac{r}{r+t}\bigg)^{n /
{p_2}}\o(r)\bigg\|_{\t,\I}\bigg\|_{s,\I} \notag\label{BurGol.eq.3}
\end{align}
uniformly in $\o \in \O_{\t}$, where
\begin{equation}\label{BurGol.eq.400}
s = \bigg\{ \begin{array}{cc}
  \frac{p_1 \t}{p_1 - \t} & \qq \t < p_1 \\
  \i & \qq \t \ge p_1 \\
\end{array}\bigg..
\end{equation}
\end{thm}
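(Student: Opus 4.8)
The plan is to reduce the estimate for $\|M\|_{L_{p_1}(\rn)\rw LM_{p_2\t,\o}(\rn)}$ to a one-dimensional weighted Hardy inequality by symmetrisation. Write $Hg(\sigma):=\tfrac1\sigma\int_0^\sigma g$ and let $f^{*}$ denote the non-increasing rearrangement of $f$. Using the Hardy--Littlewood inequality $\int_E|h|\le\int_0^{|E|}h^{*}$ together with the classical equivalence $(Mf)^{*}(s)\ap f^{**}(s)=(Hf^{*})(s)$, one gets $\|Mf\|_{p_2,B(0,r)}\ls\|Hf^{*}\|_{p_2,(0,cr^{n})}$ for every $r>0$; conversely, for a radially non-increasing $g$ a standard ball-geometry computation yields the pointwise two-sided bound $Mg(x)\ap|x|^{-n}\int_{B(0,|x|)}g$, whence $\|Mg\|_{p_2,B(0,r)}\ap\|Hg^{*}\|_{p_2,(0,c'r^{n})}$. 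Since $Hg$ is non-increasing whenever $g$ is, $\|Hg\|_{p_2,(0,c\sigma)}\ap\|Hg\|_{p_2,(0,\sigma)}$ for any fixed $c$; substituting $\sigma=r^{n}$ in the outer norm and recalling that for the operator $g\mapsto\int_0^{\cdot}g$ a supremum over non-increasing functions equals one over all non-negative functions (because $\int_0^{\sigma}g^{*}\ge\int_0^{\sigma}g$), one arrives at
$$
\|M\|_{L_{p_1}(\rn)\rw LM_{p_2\t,\o}(\rn)}\ap\sup_{0\le\psi,\ \|\psi\|_{p_1,\I}\le1}\Big\|\widetilde{\o}(\sigma)\,\|H\psi\|_{p_2,(0,\sigma)}\Big\|_{\t,\I},
$$
where $\widetilde{\o}(\sigma)=\o(\sigma^{1/n})\sigma^{(1-n)/(n\t)}$ is an explicit power modification of $\o$.

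It then remains to compute this best constant. If $p_2=p_1$ (in particular if $p_1=\i$), Hardy's inequality gives $\|H\psi\|_{p_1,(0,\sigma)}\ap\|\psi\|_{p_1,(0,\sigma)}$ for non-increasing $\psi$, and the remaining ``Morrey embedding'' $\big\|\widetilde{\o}(\sigma)\|\psi\|_{p_1,(0,\sigma)}\big\|_{\t,\I}\ls\|\psi\|_{p_1,\I}$ has best constant $\ap\|\o\|_{\t,\I}=\big\|r^{n(1/p_2-1/p_1)}\o(r)\big\|_{\t,\I}$ --- the upper bound is H\"older's inequality and the lower one comes from testing with $\psi=\chi_{(0,a)}$ and letting $a\dn0$. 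If $0<p_2<p_1$ and $\t=\i$, then $p_2/p_1<1$, so H\"older gives the single-scale bound $\|H\psi\|_{p_2,(0,\sigma)}\ls\sigma^{1/p_2-1/p_1}\|\psi\|_{p_1,\I}$, hence $\|M\|\ls\big\|r^{n(1/p_2-1/p_1)}\o(r)\big\|_{\i,\I}$; this is matched from below by the test functions $\chi_{B(0,t)}$, for which $M\chi_{B(0,t)}(x)\ap\min\{1,(t/|x|)^{n}\}$ and $\|M\chi_{B(0,t)}\|_{p_2,B(0,t)}\ap t^{n/p_2}$, giving the ratio $\gs\o(t)t^{n(1/p_2-1/p_1)}$. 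Together these prove part~1 and \eqref{BurGol.eq.1}.

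The substantive case is part~2: $0<p_2<p_1$, $1<p_1$, $\t<\i$. Here the single-scale bound is too lossy, because no single $\psi$ can saturate all scales at once, and the full weighted Hardy inequality must be used. If $p_2>1$ one reduces the inner operator as above, $\|H\psi\|_{p_2,(0,\sigma)}\ap\|\psi\|_{p_2,(0,\sigma)}$, and then the substitution $\phi=\psi^{p_2}$ turns the inequality into a classical weighted Hardy inequality $\big\|\int_0^{\cdot}\phi\big\|_{\t/p_2,\,\widetilde{\o}^{\t}d\sigma}\le C\,\|\phi\|_{p_1/p_2}$, with $p_1/p_2>1$ and with the exponents $\t/p_2$ and $p_1/p_2$ comparing exactly as $\t$ and $p_1$; if $p_2\le1$ one keeps $H\psi$ and uses the corresponding iterated-Hardy characterization directly. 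Invoking the known description of the weighted Hardy inequality --- a supremal ($L_\i$) condition when $\t\ge p_1$ and an integral ($L_{s}$) condition when $\t<p_1$ --- and unwinding the substitutions $\sigma=r^{n}$ and $\widetilde{\o}\mapsto\o$ produces the best constant $\ap\big\|t^{n(1/p_2-1/p_1)-1/s}\|\o\|_{\t,(t,\i)}\big\|_{s,\I}$ with $s$ as in \eqref{BurGol.eq.400}; the equivalence with the second expression on the right-hand side of \eqref{BurGol.eq.2} follows from the elementary identity $\big\|(r/(r+t))^{n/p_2}\o(r)\big\|_{\t,\I}\ap t^{-n/p_2}\|r^{n/p_2}\o(r)\|_{\t,(0,t)}+\|\o\|_{\t,(t,\i)}$ together with a further Hardy-type equivalence that absorbs the first summand into the second after the outer $t^{n(1/p_2-1/p_1)-1/s}$-weighted $L_{s}$-norm is applied. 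The main obstacle is this last step --- verifying that the Hardy constant is genuinely attained (up to multiplicative constants) by the radial test functions --- and it is here that the assumption $p_1>1$ is essential, both for the $L_{p_1}$-boundedness of $M$ behind the upper estimate and for Hardy's inequality used to handle the inner averaging operator.
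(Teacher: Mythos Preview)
The paper does not prove this theorem in full --- it is quoted from \cite{BurGol} --- but it spells out the mechanism immediately afterwards: combine Theorem~\ref{reduction.lemma} (the abstract equivalence $\|M\|_{X\to Y}\ap\|\Id\|_{X\to Y}$ whenever $M$ is bounded on $X$ and $Y$ has the lattice property) with Theorem~\ref{BurGol00} (the explicit computation of $\|\Id\|_{L_{p_1}\to LM_{p_2\t,\o}}$ via a Hardy inequality). Your route is genuinely different. Rather than passing through the identity embedding, you invoke the Herz--Stein equivalence $(Mf)^{*}\ap f^{**}$ together with equimeasurability of rearrangements to transfer the whole problem to a one-dimensional weighted Hardy inequality for the averaging operator, and then read off the constants from the classical characterizations. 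Both arguments ultimately rest on the same two inputs --- the $L_{p_1}$-boundedness of $M$ for $p_1>1$ and a weighted Hardy inequality --- but the paper's scheme separates them cleanly (the boundedness of $M$ enters exactly once, in Theorem~\ref{reduction.lemma}, and the remainder is the pure embedding $\|\Id\|$, which is what Theorem~\ref{main1} later generalises), whereas your approach intertwines them through rearrangement theory. The paper's route is shorter and extends verbatim to weighted sources $L_{p_1}(\rn,v_1)$ with $v_1\in A_{p_1}$ (Corollary~\ref{Corollary.1}); your route has the compensating virtue of making explicit why radially non-increasing functions are the extremisers.
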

The idea used in \cite{BurGol} is mainly based on the following
theorems.
\begin{thm}[\cite{BurGol}, see
also \cite{Bur1}]\label{BurGol00}
Let $n \in \N$, $0 < p_2 \le p_1
\le \i$, $0 < \t \le \i$, and $\o \in \O_{\t}$.

{\rm 1.} If $p_2 = p_1$, $0 < \t \le \i$ or $0 < p_2 < p_1$, $\t =
\i$, then
\begin{equation}\label{BurGol.eq.100}
\|\Id\|_{L_{p_1}(\rn) \rw LM_{p_2 \t, \o}(\rn)} \ap
\left\|r^{n(1/{p_2} - 1 / {p_1})}\o(r)\right\|_{\t,\I}
\end{equation}
uniformly in $\o \in \O_{\t}$.

{\rm 2.} If $0 < p_2 < p_1$ and $\t < \i$, then
\begin{align}
\|\Id\|_{L_{p_1}(\rn) \rw LM_{p_2 \t, \o}(\rn)} & \ap
\left\|t^{n(1/{p_2} - 1 / {p_1}) -
1/s}\|\o\|_{\t,(t,\i)}\right\|_{s,\I} \label{BurGol.eq.200}
\end{align}
uniformly in $\o \in \O_{\t}$, where $s$ is defined by
\eqref{BurGol.eq.400}.
\end{thm}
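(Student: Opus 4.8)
The plan is to show that, after a standard reduction, the embedding $L_{p_1}(\rn)\hra LM_{p_2\t,\o}(\rn)$ is equivalent to a one-dimensional weighted Hardy inequality with an explicitly known best constant, and then to rewrite that constant in the stated form. First I would dispose of Part 1, which is elementary. If $p_2=p_1=:p$, then $\|f\|_{p,\Br}\le\|f\|_{p,\rn}$ gives at once $\|f\|_{LM_{p\t,\o}(\rn)}\le\|\o\|_{\t,\I}\|f\|_{p,\rn}$, while testing with $\chi_{B(0,\ve)}$ and letting $\ve\rw0$ gives the reverse bound $\|\Id\|\gs\|\o\|_{\t,\I}$; since $r^{n(1/p_2-1/p_1)}\equiv1$ here, this is \eqref{BurGol.eq.100}. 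If $0<p_2<p_1$ and $\t=\i$, then H\"older's inequality yields $\|f\|_{p_2,\Br}\ls r^{n(1/p_2-1/p_1)}\|f\|_{p_1,\Br}\le r^{n(1/p_2-1/p_1)}\|f\|_{p_1,\rn}$, hence $\|\Id\|\ls\big\|r^{n(1/p_2-1/p_1)}\o(r)\big\|_{\i,\I}$, and the matching lower bound comes from testing with $\chi_{B(0,R)}$ and letting $R\rw\i$.

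For Part 2 the idea is to reduce to one dimension. Put $q:=p_1/p_2\in(1,\i]$ and $\mu:=\t/p_2$; for $f\in L_{p_1}(\rn)$ set $g:=|f|^{p_2}\in\mp^+(\rn)$, so that $\|f\|_{p_2,\Br}^{\t}=\big(\int_{\Br}g\big)^{\mu}$ and $\|f\|_{p_1,\rn}=\|g\|_{q,\rn}^{1/p_2}$. Raising the embedding inequality to the power $p_2$ then shows that $\|\Id\|_{L_{p_1}(\rn)\rw LM_{p_2\t,\o}(\rn)}^{p_2}$ is precisely the least constant $D$ for which $\big(\int_0^\i\o(r)^{\t}\big(\int_{\Br}g\big)^{\mu}dr\big)^{1/\mu}\le D\,\|g\|_{q,\rn}$ holds for all $g\in\mp^+(\rn)$. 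Next I would pass to polar coordinates, $\int_{\Br}g=\int_0^r\big(\int_{\partial B(0,\rho)}g\,d\sigma\big)d\rho$ and $\|g\|_{q,\rn}^{q}=\int_0^\i\big(\int_{\partial B(0,\rho)}g^{q}\,d\sigma\big)d\rho$, and apply H\"older's inequality on the spheres $\partial B(0,\rho)$ to obtain $\int_{\partial B(0,\rho)}g\,d\sigma\le c_n\,\rho^{(n-1)/q'}\psi(\rho)$, where $\psi(\rho):=\big(\int_{\partial B(0,\rho)}g^{q}\,d\sigma\big)^{1/q}$ satisfies $\|\psi\|_{q,\I}=\|g\|_{q,\rn}$, with \emph{equality} in the H\"older estimate when $g$ is radial. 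Using the H\"older estimate for general $g$ (for the upper bound) and its sharpness for radial $g$ (for the lower bound), one gets $D\ap D_0$ — with constants depending only on $n$ and $q$ — where $D_0$ is the least constant in
\begin{equation*}
\Big(\int_0^\i\o(r)^{\t}\Big(\int_0^r\rho^{(n-1)/q'}\psi(\rho)\,d\rho\Big)^{\mu}dr\Big)^{1/\mu}\le D_0\,\|\psi\|_{q,\I},\qquad\psi\in\mp^+\I.
\end{equation*}

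It then remains to evaluate $D_0$ by the classical theory of weighted Hardy inequalities. Absorbing the kernel $\rho^{(n-1)/q'}$ into the source norm turns the last display into a standard weighted Hardy inequality with source weight $V(\rho)=\rho^{-(n-1)(q-1)}$, for which $V^{1-q'}(\rho)=\rho^{n-1}$ and $\int_0^t V^{1-q'}=t^{n}/n$. If $\t\ge p_1$ (so $\mu\ge q$ and $s=\i$), the Muckenhoupt--Bradley criterion gives $D_0\ap\sup_{t>0}\big(\int_t^\i\o^{\t}\big)^{1/\mu}\big(\int_0^t V^{1-q'}\big)^{1/q'}\ap\sup_{t>0}\|\o\|_{\t,(t,\i)}^{p_2}\,t^{n/q'}$; taking $p_2$-th roots and using $1/(p_2q')=1/p_2-1/p_1$ gives $\|\Id\|\ap\big\|t^{n(1/p_2-1/p_1)}\|\o\|_{\t,(t,\i)}\big\|_{\i,\I}$, which is \eqref{BurGol.eq.200} with $s=\i$. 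If $\t<p_1$ (so $0<\mu<q$), set $1/\ga=1/\mu-1/q$ and note $1/\ga=p_2/s$; the Maz'ya--Rozin criterion expresses $D_0$ as a weighted integral of suitable powers of $\int_t^\i\o^{\t}$ and $\int_0^t V^{1-q'}=t^{n}/n$, and after this elementary substitution — the exponents being forced by $1/s=1/\t-1/p_1$ — it collapses to $D_0\ap\big(\int_0^\i t^{sn(1/p_2-1/p_1)-1}\|\o\|_{\t,(t,\i)}^{s}\,dt\big)^{p_2/s}$; taking $p_2$-th roots yields $\|\Id\|\ap\big\|t^{n(1/p_2-1/p_1)-1/s}\|\o\|_{\t,(t,\i)}\big\|_{s,\I}$, i.e.\ \eqref{BurGol.eq.200}. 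Throughout, $\o\in\O_{\t}$ keeps $\|\o\|_{\t,(t,\i)}$ finite and positive for every $t>0$, so all the quantities above are meaningful.

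The step I expect to be most delicate is the two-sidedness of the reduction in Part 2: the upper bound uses H\"older's inequality on the spheres $\partial B(0,\rho)$, and for the \emph{matching} lower bound one must be sure that nothing is lost there — which is exactly why it is enough to test with radial $f$, equivalently to feed the extremals (or near-extremals) of the one-dimensional Hardy inequality back through $g=|f|^{p_2}$. Everything else — the elementary estimates of Part 1, and the Muckenhoupt--Bradley and Maz'ya--Rozin descriptions of the one-dimensional weighted Hardy inequality — is classical; the only remaining (minor) obstacle is the exponent bookkeeping in the Maz'ya--Rozin step.
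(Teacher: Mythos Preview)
Your proposal is correct and follows essentially the paper's route. The paper does not prove Theorem~\ref{BurGol00} separately --- it is cited from \cite{BurGol} --- but its weighted generalization, Theorem~\ref{main1}, is proved by precisely your substitution $g=|f|^{p_2}$, $q_1=p_1/p_2$, $q_2=\t/p_2$, which identifies the embedding norm with the $p_2$-th root of the best constant in the $n$-dimensional Hardy inequality $\|Hg\|_{q_2,\o^{\t},\I}\le c\|g\|_{q_1,\rn}$; the paper then simply invokes Theorem~\ref{lem.2.8} (the Dr\'abek--Heinig--Kufner multidimensional Hardy inequality) as a black box. Your polar-coordinate reduction to one dimension followed by the classical Muckenhoupt--Bradley and Maz'ya--Rozin criteria is exactly how that black box is established, so you are unpacking Theorem~\ref{lem.2.8} rather than taking a genuinely different road. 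Two minor differences are worth flagging: you dispose of Part~1 by direct test-function arguments (the paper treats all cases uniformly through the Hardy inequality; note also that ``letting $R\to\i$'' in the $\t=\i$ lower bound is not quite the right move --- one should take the supremum over $R$, which recovers $\esup_r r^{n(1/p_2-1/p_1)}\o(r)$ up to a constant); and the Maz'ya--Rozin constant you obtain is, after specialization $v_1=v_2={\bf 1}$, the form displayed in Theorem~\ref{main1}(ii), namely $\big(\int_0^{\i}\|\o\|_{\t,(t,\i)}^{s-\t}\o(t)^{\t}t^{sn(1/p_2-1/p_1)}\,dt\big)^{1/s}$, and passing from this to \eqref{BurGol.eq.200} requires the standard integration-by-parts equivalence between the two forms of the Hardy condition, which you correctly flag as the remaining bookkeeping.
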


\begin{thm}\label{reduction.lemma}
Let $X,\,Y$ be a quasi-normed vector spaces of measurable functions
on $\rn$ and let $M$ is bounded on $X$. Moreover, assume that $Y$
satisfies the monotonicity property, that is,
$$
0 \le g \le f \qq \Rightarrow \qq \|g\|_Y \ls \|f\|_Y.
$$
Then $M$ is bounded from $X$ to $Y$ if and only if
$X\hookrightarrow Y$, and
$$
\|M\|_{X \rw Y} \ap \|I\|_{X \rw Y}.
$$
\end{thm}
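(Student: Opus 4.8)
The plan is to prove the two implications of the equivalence separately, keeping the monotonicity of $Y$ and the boundedness of $M$ on $X$ as the only structural hypotheses. First I would establish the pointwise lower bound for the maximal operator: for every $f \in L_1^{\loc}(\rn)$ one has $|f(x)| \le Mf(x)$ for a.e. $x \in \rn$ by the Lebesgue differentiation theorem. This single inequality drives the easy direction.

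For the forward implication, suppose $M$ is bounded from $X$ to $Y$. Given $f \in X$, the pointwise estimate $|f| \le Mf$ together with the monotonicity property of $Y$ yields $\|f\|_Y \ls \|Mf\|_Y \ls \|M\|_{X \rw Y}\|f\|_X$, so the natural embedding $\Id \colon X \hookrightarrow Y$ is continuous and $\|\Id\|_{X \rw Y} \ls \|M\|_{X \rw Y}$. Here one should take care that $X$ is a quasi-normed space of measurable functions on $\rn$ on which $M$ acts, so $Mf$ is defined and the chain of inequalities makes sense; the constants hidden in $\ls$ are the quasi-norm constant of $Y$ and the constant in the monotonicity property, both independent of $f$.

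For the reverse implication, suppose $X \hookrightarrow Y$. Let $f \in X$. Since $M$ is bounded on $X$, we have $Mf \in X$ with $\|Mf\|_X \ls \|M\|_{X \rw X}\|f\|_X$. Applying the embedding $X \hookrightarrow Y$ to the function $Mf$ gives $\|Mf\|_Y \ls \|\Id\|_{X \rw Y}\|Mf\|_X \ls \|\Id\|_{X \rw Y}\|M\|_{X \rw X}\|f\|_X$, which shows $M$ is bounded from $X$ to $Y$ and $\|M\|_{X \rw Y} \ls \|\Id\|_{X \rw Y}$. Combining the two one-sided estimates gives $\|M\|_{X \rw Y} \ap \|\Id\|_{X \rw Y}$ as claimed; note the boundedness of $M$ on $X$ is what makes this direction work, and it is the only place that hypothesis is used.

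I do not expect any serious obstacle here; the statement is essentially a formal consequence of the sublinearity-free inequality $|f| \le Mf$, the self-improving action of $M$ on $X$, and the order property of $Y$. The only point meriting a sentence of care is the implicit requirement that $M$ be well-defined on the relevant classes — i.e. that elements of $X$ (and of $Y$, for the monotonicity comparison) are locally integrable so that $Mf$ makes sense — which in the applications (weighted local Morrey-type spaces and weighted Lebesgue spaces with $p \ge 1$) is automatic; one might flag this as a standing assumption rather than prove it. Everything else is a two-line estimate in each direction.
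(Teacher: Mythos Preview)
Your proposal is correct and follows essentially the same approach as the paper: both directions use the pointwise inequality $|f|\le Mf$ together with the monotonicity of $Y$ to get $\|\Id\|_{X\rw Y}\ls\|M\|_{X\rw Y}$, and the boundedness of $M$ on $X$ composed with the embedding to get $\|M\|_{X\rw Y}\ls\|M\|_{X\rw X}\,\|\Id\|_{X\rw Y}$. Your added remarks on the Lebesgue differentiation theorem and on local integrability are sensible expository points but do not change the argument.
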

\begin{proof}
Since $|f| \le Mf$, by the lattice property of $Y$, we have that
$$
\|I\|_{X \rw Y} = \sup_{f\not\sim 0} \frac{\|f\|_Y}{\|f\|_X} \ls
\sup_{f\not\sim 0} \frac{\|Mf\|_Y}{\|f\|_X} = \|M\|_{X \rw Y}.
$$
On the other hand,
$$
\|M\|_{X \rw Y} = \sup_{f\not\sim 0} \frac{\|Mf\|_Y}{\|f\|_X} \le
\left(\sup_{f\not\sim 0} \frac{\|Mf\|_X}{\|f\|_X} \right)\|I\|_{X
\rw Y} = \|M\|_{X \rw X}\,\|I\|_{X \rw Y}.
$$
We have used that $\|g\|_Y \le \|g\|_X \|I\|_{X \rw Y}$ for any $g
\in X$.
\end{proof}
% The proof of Theorem \ref{reduction.lemma} is straightforward and we
% omit it.
Note that in \cite{BurGol} Theorem \ref{reduction.lemma}
was proved for $X = L_{p_1}(\rn)$ and $Y = LM_{p_2 \t, \o}(\rn)$.

The aim of this paper is to characterize the embeddings between
weighted local Morrey-type spaces and weighted Lebesgue spaces, that
is, the embeddings
\begin{align}
L_{p_1}(\rn,v_1) & \hra LM_{p_2 \t, \o}(\rn,v_2), \label{emb1}\\
L_{p_1}(\rn,v_1) & \hra \dual{LM}_{p_2 \t, \o}(\rn,v_2), \label{emb2}\\
L_{p_1}(\rn,v_1) & \hookleftarrow LM_{p_2\t, \o}(\rn,v_2), \label{emb3}\\
L_{p_1}(\rn,v_1) & \hookleftarrow \dual{LM}_{p_2 \t, \o}(\rn,v_2).
\label{emb4}
\end{align}
The method of investigation is based on using the characterizations
of the direct and reverse multidimensional Hardy inequalities.

Our main results are Theorems \ref{main1}, \ref{main100},
\ref{main2} and \ref{main200}. Note that Theorem \ref{main1} is a
generalization of Theorem \ref{BurGol00} to the weighted case.
Theorems \ref{main100}, \ref{main2} and \ref{main200} are
characterizations of embeddings \eqref{emb2}, \eqref{emb3} and
\eqref{emb4}, respectively. Using Theorems \ref{main1} and
\ref{main100} we are able to calculate the norms
$\|M\|_{L_{p_1}(\rn,v_1) \rw LM_{p_2 \,\t, \o}(\rn,v_2)}$ and
$\|M\|_{L_{p_1}(\rn,v_1) \rw \dual{LM}_{p_2 \,\t, \o}(\rn,v_2)}$,
when $v_1$ is a weight function from the Muckenhoupt class
$A_{p_1}$, $1 < p_1 < \i$ (see Corollaries \ref{Corollary.1} and
\ref{Corollary.3.6}). Theorems \ref{main2} and \ref{main200} make
it possible to find the associate spaces of $\LMv$ and $\LMvd$
(see Theorems \ref{thm.6.6} and \ref{thm.6.5}).

The paper is organized as follows. Section \ref{sec2} contains
some preliminaries along with the standard ingredients used in the
proofs. In Sections \ref{sec3} and \ref{sec4} we give the
characterizations of the embeddings \eqref{emb1}, \eqref{emb2} and
\eqref{emb3}, \eqref{emb4},  respectively.

%%%%%%%%%%%%%%%%%%%%%%%%%%%%%%%%%%%%%%%%%%%%%%%%%%%%%%%%%%%%%%%%%%%

\section{Some Hardy-type inequalities}\label{sec2}

In \cite{ChristGraf}, M.~Christ and L.~Grafakos showed that the
$n$-dimensional Hardy inequality
$$
\int_{\rn} \left(\frac{1}{|B(0,|x|)} \int_{B(0,|x|)}
f(y)\,dy\right)^p\,dx \le \left( \frac{p}{p-1}\right)^p \int_{\rn}
f(x)^p\,dx, \qq 1 < p < \i,
$$
holds for all $f \in \mp^+(\rn)$, the constant $(
\frac{p}{p-1})^p$ being again the best possible. In
\cite{DrabHeinKuf}, P.~Dr\'{a}bek, H.P.~Heining and A.~Kufner
extended this Hardy inequality to general $n$-dimensional weights
$u,\,w$ and to the whole range of the parameters $p,\,q$, $1 < p <
\i$, $0 < q < \i$. The necessary and sufficient conditions for the
validity of the inequality
\begin{equation}\label{H_n}
\left( \int_{\rn}  \left(\int_{B(0,|x|)} f(y)\,dy\right)^q
\,u(x)\,dx \right)^{1/q} \le C \left( \int_{\rn} f(x)^p \,w(x)\,dx
\right)^{1/p}
\end{equation}
for all $f \in \mp^+(\rn)$ are exactly the analogous of the
corresponding conditions for dimension one. It is easy to see that
if $u$ is a function on $\rn$ such that $u(x) = v(|x|) /
|x|^{n-1}$, $x\in\rn$, for some function $v \in \mp^+\I$, then
\eqref{H_n} is equivalent to the following inequality
\begin{equation}
\left( \int_0^{\i}  \left(\int_{\Bt} f(y)\,dy\right)^q \,v(t)\,dt
\right)^{1/q} \le C \left( \int_{\rn} f(x)^p \,w(x)\,dx
\right)^{1/p}.
\end{equation}
According to the above remark, we can formulate the following
theorems.
\begin{thm}\label{lem.2.8}
Let $1 \le p \le \i$, $0 < q \le \i$, $v \in \mp^+\I$ and $w \in
\mp^+(\rn)$. Denote by
$$
(Hf) (t) : = \int_{\Bt} f(x)\,dx,  \qq f \in \mp^+(\rn),\qq t \ge
0.
$$
Then the inequality
\begin{equation}\label{eq.2.2}
\left\|Hf\right\|_{q,v,\I}\leq c \left\|f\right\|_{p,w,\rn}
\end{equation}
holds for all $f \in \mp^+(\rn)$ if and only if $A(p,q) < \i$, and
the best constant in \eqref{eq.2.2}, that is,
\begin{equation*}
B(p,q) : = \sup_{f\in \mp^+(\rn)} \left\|Hf\right\|_{q,v,\I} /
\left\|f\right\|_{p,w,\rn}
\end{equation*}
satisfies $B(p,q) \ap A(p,q)$, where

{\rm (a)} for $1 < p \le q < \i$,
$$
A(p,q): = \sup_{t > 0} \left( \int_t^{\i} v(s)\,ds \right)^{1/q}
\left( \int_{\Bt} w(x)^{1-p'}\,dx \right)^{1/ {p'}} \,;
$$

{\rm (b)} for $1 < p  < \i$, $0<q<p$  and $1 /r = 1 / q -  1 / p$,
$$
A(p,q) : = \left(\int_0^{\i}\left(\int_t^{\i}
v(s)ds\right)^{{r}/{p}}\,v(t)\,
\left(\int_{\Bt}{w(x)}^{1-p'}\,dx\right)^{{r}/{p'}}\,dt\right)^{{1}/{r}}
\,;
$$

{\rm (c)} for $1<p < \i$, $q = \i$,
$$
A(p,q): = \sup_{t > 0}\left(\esup_{t < s < \i} v(s)\right)
\left(\int_{\Bt}{w(x)}^{1-p'}\,dx\right)^{1/{p'}} \,;
$$

{\rm (d)} for $p = q = \i$,
$$
A(p,q) : =\sup_{t>0}\left(\esup_{t < s < \i} v(s)\right)
\int_{\Bt} \frac{dx}{w(x)} \, ;
$$

{\rm (e)} for $p = \i$, $0 < q  < \i$,
$$
A(p,q) : = \left(\int_0^{\i}v(t)\,
\left(\int_{\Bt}\frac{dx}{w(x)}\right)^q \,dt\right)^{{1}/{q}} \,;
$$

{\rm (f)} for $p = 1$, $1 \le q < \i$,
$$
A(p,q) : = \sup_{t \in \I}\left(\int_t^{\i}
v(s)ds\right)^{{1}/{q}} \esup_{x \in \Bt}w(x)^{-1} \,;
$$

{\rm (g)} for $p = 1$, $0 < q < 1$,
$$
A(p,q) : = \left(\int_0^\i \left(\int_t^{\i} v(s)\,
ds\right)^{q'}\, v(t) \left(\esup_{x \in
\Bt}w(x)^{-1}\right)^{q'}\,dt \right)^{{1}/{q'}} \,;
$$

{\rm (h)} for $p = 1$, $q = \i$,
$$
A(p,q) : = \sup_{t \in \I} \left(\esup_{t < s < \i}v(s)\right)
\left(\esup_{x \in \Bt}w(x)^{-1}\right).
$$
\end{thm}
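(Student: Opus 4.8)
The plan is to reduce the $n$-dimensional inequality \eqref{eq.2.2} to a one-dimensional weighted Hardy inequality by passing to polar coordinates and applying Hölder's inequality on the spheres $\{|x|=s\}$, and then to read off each condition $A(p,q)$ from the classical characterization of the corresponding one-dimensional Hardy inequality.

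First, for $f\in\mp^+(\rn)$ set $g(s):=\int_{S^{n-1}}f(s\o)\,s^{n-1}\,d\o$, so that $(Hf)(t)=\int_0^t g(s)\,ds$ and the left-hand side of \eqref{eq.2.2} depends on $f$ only through $g$. When $1<p<\i$, Hölder's inequality on $S^{n-1}$ with exponents $p,p'$ gives, for a.e.\ $s>0$,
$$
g(s)\le\Big(\int_{S^{n-1}}f(s\o)^p\,w(s\o)\,s^{n-1}\,d\o\Big)^{1/p}\,W(s)^{1/p'},\qquad W(s):=\int_{S^{n-1}}w(s\o)^{1-p'}\,s^{n-1}\,d\o,
$$
with equality for each fixed $s$ when $f(s\o)$ is proportional in $\o$ to $w(s\o)^{1-p'}$. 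Since $\|f\|_{p,w,\rn}^p=\int_0^\i\big(\int_{S^{n-1}}f(s\o)^p w(s\o)s^{n-1}d\o\big)\,ds$ and $f$ may be chosen freely, optimizing over the radial profile shows that $B(p,q)$ is comparable to the norm of the one-dimensional operator $\vp\mapsto\int_0^t\vp(s)W(s)^{1/p'}\,ds$ from $L_p(0,\i)$ to $L_q((0,\i),v)$; substituting $\vp W^{1/p'}$ for its integrand, this is the best constant in the one-dimensional Hardy inequality with left weight $v$ and right weight $\widetilde w$ determined by $\widetilde w^{\,1-p'}=W$, for which $\int_0^t\widetilde w(s)^{1-p'}\,ds=\int_{\Bt}w(x)^{1-p'}\,dx$. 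For $p=\i$ the same reduction applies with the Hölder step replaced by $g(s)\le\|f\|_{\i,w,\rn}\int_{S^{n-1}}w(s\o)^{-1}s^{n-1}d\o$ and $\int_0^t(\cdots)\,ds=\int_{\Bt}dx/w(x)$; for $p=1$ it applies with $g(s)\le\big(\esup_\o w(s\o)^{-1}\big)\int_{S^{n-1}}f(s\o)w(s\o)s^{n-1}d\o$ and $\esup_{0<s<t}\esup_\o w(s\o)^{-1}=\esup_{x\in\Bt}w(x)^{-1}$. For $1<p<\i$ and $0<q<\i$ one may instead simply invoke the Dr\'{a}bek--Heinig--Kufner characterization of \eqref{H_n} with $u(x)=v(|x|)/|x|^{n-1}$, as recalled before the statement.

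It then remains to insert the known one-dimensional characterizations into each parameter range and to rewrite them by means of the identities above. The Muckenhoupt--Bradley condition gives (a); the Maz'ya--Rozin type condition for $q<p$, in the particular equivalent form obtained after an integration by parts transferring a derivative between $\int_t^\i v$ and $\int_{\Bt}w^{1-p'}$, gives (b); the elementary supremum condition for the target space $L_\i$ gives (c); the cases $p=\i$ reduce to estimating $\big\|\int_{\Bt}dx/w(x)\big\|_{q,v,\I}$, which, using that $t\mapsto\int_{\Bt}dx/w(x)$ is continuous and non-decreasing, yields (d) and (e); finally, for $p=1$ the corresponding statements with $\esup_{x\in\Bt}w(x)^{-1}$ in place of $\big(\int_{\Bt}w^{1-p'}\big)^{1/p'}$ yield (f), (g) and (h). All the equivalence constants depend only on $p$ and $q$.

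The two points that require genuine care are the sharpness of the reduction to one dimension — one must verify that restricting attention to test functions adapted to the spherical profile of $w$ does not decrease the quotient, which at the endpoints $p\in\{1,\i\}$ and $q=\i$ forces one to replace the (non-attained) extremal function by an approximating sequence of admissible functions of finite norm — and the bookkeeping needed to match each displayed $A(p,q)$ with the correct one among the several equivalent forms of the one-dimensional condition (especially in cases (b), (d) and (g)). Each individual case, once set up, is a routine specialization.
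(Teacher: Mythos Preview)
Your approach is correct and coincides with the route the paper itself indicates: the paper does not give an explicit proof of this theorem but presents it as a direct consequence of the $n$-dimensional Hardy inequality of Dr\'abek--Heinig--Kufner together with the observation, stated just before the theorem, that for a radial outer weight $u(x)=v(|x|)/|x|^{n-1}$ inequality \eqref{H_n} coincides with \eqref{eq.2.2}. Your polar-coordinate/H\"older reduction to a one-dimensional Hardy inequality is precisely the mechanism behind that citation, and your handling of the endpoint cases $p\in\{1,\infty\}$ and $q=\infty$ supplies the details the paper leaves implicit; nothing essentially different is happening.
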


\begin{thm}\label{lem.2.800}
Let $1 \le p \le \i$, $0 < q \le \i$, $v \in \mp^+\I$ and $w \in
\mp^+(\rn)$. Denote by
$$
(H^*f) (t) : = \int_{\Btd} f(x)\,dx,  \qq f \in \mp^+(\rn),\qq t
\ge 0.
$$
Then the inequality
\begin{equation}\label{eq.2.200}
\left\|H^*f\right\|_{q,v,\I}\leq c \left\|f\right\|_{p,w,\rn}
\end{equation}
holds for all $f \in \mp^+(\rn)$ if and only if $A^*(p,q) < \i$,
and the best constant in \eqref{eq.2.200}, that is,
\begin{equation*}
B^*(p,q) : = \sup_{f\in \mp^+(\rn)} \left\|H^*f\right\|_{q,v,\I} /
\left\|f\right\|_{p,w,\rn}
\end{equation*}
satisfies $B^*(p,q) \ap A^*(p,q)$. Here

{\rm (a)} for $1 < p \le q < \i$,
$$
A^*(p,q): = \sup_{t > 0} \left( \int_0^t v(s)\,ds \right)^{1/q}
\left( \int_{\Btd} w(x)^{1-p'}\,dx \right)^{1/ {p'}} \,;
$$

{\rm (b)} for $1 < p  < \i$, $0<q<p$  and $1 /r = 1 / q -  1 / p$,
$$
A^*(p,q) : = \left(\int_0^{\i}\left(\int_0^t
v(s)ds\right)^{{r}/{p}}\,v(t)\,
\left(\int_{\Btd}{w(x)}^{1-p'}\,dx\right)^{{r}/{p'}}\,dt\right)^{{1}/{r}}
\,;
$$

{\rm (c)} for $1<p < \i$, $q = \i$,
$$
A^*(p,q): = \sup_{t > 0}\left(\esup_{0 < s < t} v(s)\right)
\left(\int_{\Btd}{w(x)}^{1-p'}\,dx\right)^{1/{p'}} \,;
$$

{\rm (d)} for $p = q = \i$,
$$
A^*(p,q) : =\sup_{t>0}\left(\esup_{0 < s < t} v(s)\right)
\int_{\Btd} \frac{dx}{w(x)} \,;
$$

{\rm (e)} for $p = \i$, $0 < q  < \i$,
$$
A^*(p,q) : = \left(\int_0^{\i}v(t)\,
\left(\int_{\Btd}\frac{dx}{w(x)}\right)^q \,dt\right)^{{1}/{q}}
\,;
$$

{\rm (f)} for $p = 1$, $1 \le q < \i$,
$$
A^*(p,q) : = \sup_{t \in \I}\left(\int_0^t v(s)ds\right)^{{1}/{q}}
\esup_{x \in \Btd}w(x)^{-1} \,;
$$

{\rm (g)} for $p = 1$, $0 < q < 1$,
$$
A^*(p,q) : = \left(\int_0^\i \left(\int_0^t v(s)\,
ds\right)^{q'}\, v(t) \left(\esup_{x \in
\Btd}w(x)^{-1}\right)^{q'}\,dt \right)^{{1}/{q'}} \,;
$$

{\rm (h)} for $p = 1$, $q = \i$,
$$
A^*(p,q) : = \sup_{t \in \I} \left(\esup_{0 < s < t} v(s)\right)
\left(\esup_{x \in \Btd}w(x)^{-1}\right).
$$
\end{thm}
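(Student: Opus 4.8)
The plan is to deduce Theorem~\ref{lem.2.800} from Theorem~\ref{lem.2.8} by means of the inversion $x\mapsto x/|x|^2$ of $\rn\setminus\{0\}$, which interchanges the interior and the exterior of the balls centred at the origin and carries $H^*$ into $H$. For $f\in\mp^+(\rn)$ set
$$
(Tf)(z):=f\big(z/|z|^2\big)\,|z|^{-2n},\qquad z\in\rn\setminus\{0\}.
$$
Since the inversion is an involution with Jacobian $|z|^{-2n}$, the map $T$ is an involution of $\mp^+(\rn)$, hence a bijection; so it suffices to prove that \eqref{eq.2.200} for $f$ is equivalent to a Hardy-type inequality of the form \eqref{eq.2.2} for $Tf$ with suitably transformed weights.

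First I would record the two changes of variables. Putting $y=z/|z|^2$ in the inner integral, and using $|y|>t\iff|z|<1/t$, gives $(H^*f)(t)=(H(Tf))(1/t)$; substituting $t=1/\tau$ in the outer quasi-norm then yields
$$
\|H^*f\|_{q,v,\I}=\|H(Tf)\|_{q,\tilde v,\I},
$$
where $\tilde v(\tau):=v(1/\tau)\,\tau^{-2}$ when $q<\i$ and $\tilde v(\tau):=v(1/\tau)$ when $q=\i$. Likewise, substituting $x=z/|z|^2$ in $\|f\|_{p,w,\rn}$ and using $f(z/|z|^2)=(Tf)(z)\,|z|^{2n}$ gives
$$
\|f\|_{p,w,\rn}=\|Tf\|_{p,\tilde w,\rn},
$$
where $\tilde w(z):=|z|^{2n(p-1)}w(z/|z|^2)$ when $p<\i$ and $\tilde w(z):=|z|^{2n}w(z/|z|^2)$ when $p=\i$. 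Consequently, \eqref{eq.2.200} holds for all $f\in\mp^+(\rn)$ with constant $c$ if and only if \eqref{eq.2.2}, with $v,w$ replaced by $\tilde v,\tilde w$, holds for all $g\in\mp^+(\rn)$ with the same constant; by Theorem~\ref{lem.2.8} this happens exactly when $A(p,q)$ formed with $\tilde v,\tilde w$ is finite, and then $B^*(p,q)\ap A(p,q)[\tilde v,\tilde w]$.

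It then remains to identify $A(p,q)[\tilde v,\tilde w]$ with $A^*(p,q)$ in each of the cases (a)--(h). For the $v$-part this reduces to the elementary identities $\int_t^\i\tilde v(\tau)\,d\tau=\int_0^{1/t}v(s)\,ds$ (when $q<\i$) and $\esup_{\tau>t}\tilde v(\tau)=\esup_{0<s<1/t}v(s)$ (when $q=\i$), combined with the change of variable $t=1/\tau$ in the outermost $\int_0^\i\!\cdots dt$ or $\sup_{t>0}$. For the $w$-part one uses $(p-1)(1-p')=-1$ (for $1<p<\i$) to obtain $\int_{\Bt}\tilde w(z)^{1-p'}\,dz=\int_{\dual{B(0,1/t)}}w(x)^{1-p'}\,dx$, and similarly $\esup_{x\in\Bt}\tilde w(x)^{-1}=\esup_{x\in\dual{B(0,1/t)}}w(x)^{-1}$ for $p=1$ and $\int_{\Bt}dz/\tilde w(z)=\int_{\dual{B(0,1/t)}}dx/w(x)$ for $p=\i$. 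Putting these together yields $A(p,q)[\tilde v,\tilde w]\ap A^*(p,q)$, hence $B^*(p,q)\ap A^*(p,q)$, as claimed.

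The hard part is purely bookkeeping: the inversion introduces a Jacobian factor in the $v$-variable exactly when $q<\i$ and a weight power $|z|^{2n(p-1)}$ exactly when $p<\i$, so the eight cases must be organised according to these two dichotomies — though each individual case is a one-line change of variables. An alternative route that avoids the inversion is to invoke the $n$-dimensional conjugate Hardy inequality (characterised, as in \cite{DrabHeinKuf}, by conditions that are the exact analogues of the one-dimensional ones) and then specialise to $u(x)=v(|x|)/|x|^{n-1}$ exactly as in the paragraph preceding Theorem~\ref{lem.2.8}.
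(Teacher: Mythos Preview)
Your proof is correct. The inversion $x\mapsto x/|x|^2$ does exactly what you claim: it swaps $B(0,t)$ with $\dual{B(0,1/t)}$, carries $H^*$ to $H$, and the Jacobian factors are arranged so that in each of the eight cases $A(p,q)[\tilde v,\tilde w]=A^*(p,q)$ after the outer substitution $t\mapsto 1/t$. The bookkeeping you outline (in particular $(p-1)(1-p')=-1$ and the placement of the factor $\tau^{-2}$ only when $q<\i$) is accurate.

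The paper, however, does not give a proof of Theorem~\ref{lem.2.800} at all: it is stated, together with Theorem~\ref{lem.2.8}, immediately after the discussion of the $n$-dimensional Hardy inequality of Christ--Grafakos and Dr\'abek--Heinig--Kufner, under the blanket sentence ``According to the above remark, we can formulate the following theorems.'' In other words, the paper treats Theorem~\ref{lem.2.800} as a direct reformulation of the known $n$-dimensional \emph{conjugate} Hardy inequality specialised to a radial outer weight $u(x)=v(|x|)/|x|^{n-1}$ --- precisely the alternative route you sketch in your final paragraph. Your inversion argument is therefore a genuinely different and more self-contained derivation: it obtains the conjugate theorem from the direct one, so that only the characterisation of \eqref{H_n} needs to be quoted. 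The cost is the case-by-case check, but as you note each case is a one-line change of variables.
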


\begin{thm}\label{lem.2.8000000}
Let $0 < q \le \i$, $v \in \mp^+\I$ and $w \in \mp^+(\rn)$. Denote
by
$$
(Sf) (t) : = \esup_{x \in \Bt} f(x),  \qq f \in \mp^+(\rn),\qq t
\ge 0.
$$
Then the inequality
\begin{equation*}
\left\|(Sf)v\right\|_{q,\I}\leq c \left\|fw\right\|_{\i,\rn}
\end{equation*}
holds for all $f \in \mp^+(\rn)$ if and only if
$$
\bigg\| v(r)\bigg(\esup_{x \in \Br}w(x)^{-1}\bigg)\bigg\|_{q,\I} <
\i,
$$
and
\begin{equation*}
\sup_{f\in \mp^+(\rn)} \left\|(Sf)v\right\|_{q,\I} /
\left\|fw\right\|_{\i,\rn} \ap \bigg\| v(r)\bigg(\esup_{x \in
\Br}w(x)^{-1}\bigg)\bigg\|_{q,\I}.
\end{equation*}
\end{thm}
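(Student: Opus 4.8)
The plan is to prove both directions essentially by hand, exploiting the fact that for the order-preserving supremum operator $S$ the function $w^{-1}$ is extremal; unlike in Theorems~\ref{lem.2.8} and~\ref{lem.2.800}, no splitting into cases according to the value of $q$ is required. Throughout write $A:=\big\|v(r)\big(\esup_{x\in\Br}w(x)^{-1}\big)\big\|_{q,\I}$ for the quantity on the right-hand side.

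First I would establish sufficiency together with the upper estimate for the best constant. Fix $f\in\mp^+(\rn)$. From $f(x)w(x)\le\|fw\|_{\i,\rn}$ for a.e. $x\in\rn$ one gets $f(x)\le\|fw\|_{\i,\rn}\,w(x)^{-1}$ for a.e. $x$, where on the null sets on which $w$ vanishes or is infinite the products and quotients involved are read off from the conventions of Convention~\ref{Notat.and.prelim.conv.1.1} (recall $w$ need only lie in $\mp^+(\rn)$). Passing to the essential supremum over $x\in\Bt$ for each $t>0$ gives $Sf(t)\le\|fw\|_{\i,\rn}\,\esup_{x\in\Bt}w(x)^{-1}$, and then multiplying by $v(t)$ and using the monotonicity of $\|\cdot\|_{q,\I}$ yields $\|(Sf)v\|_{q,\I}\le\|fw\|_{\i,\rn}\,A$. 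Hence the inequality holds with $c=A$, and $B:=\sup_{f\in\mp^+(\rn)}\|(Sf)v\|_{q,\I}/\|fw\|_{\i,\rn}\le A$.

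For the converse I would simply test the assumed inequality on $f_{0}:=w^{-1}\in\mp^+(\rn)$. Then $Sf_{0}(t)=\esup_{x\in\Bt}w(x)^{-1}$ while $\|f_{0}w\|_{\i,\rn}=\esup_{\rn}w^{-1}w\le1$ (again via the conventions on the exceptional sets), so $A=\|(Sf_{0})v\|_{q,\I}\le c\,\|f_{0}w\|_{\i,\rn}\le c<\i$. This shows that finiteness of $A$ is necessary and that $A\le B$; combined with the previous paragraph, $A=B$, which is the assertion (in fact with the equivalence strengthened to an equality). The proof is this short precisely because $S$ is a supremum, not an averaging, operator, so the pointwise majorant $\|fw\|_{\i,\rn}\,w^{-1}$ of $f$ is transported by $S$ without any loss; the only point needing care — the "hard part", such as it is — is the bookkeeping on the sets $\{w=0\}$ and $\{w=+\i\}$, which is handled entirely by the standing conventions $0/0=0$, $0\cdot(\pm\i)=0$ and $1/(\pm\i)=0$.
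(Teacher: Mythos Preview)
Your proof is correct. The paper does not actually supply a proof of this theorem: it is listed in Section~\ref{sec2} among the preliminary Hardy-type inequalities and stated without argument, so there is nothing to compare against. Your direct approach --- bounding $f$ pointwise by $\|fw\|_{\i,\rn}\,w^{-1}$ for sufficiency, then testing on $f_0=w^{-1}$ for necessity --- is the natural one for a supremum operator and in fact yields equality $B=A$, not merely equivalence.

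One very minor point of bookkeeping: the pointwise bound $f(x)\le\|fw\|_{\i,\rn}\,w(x)^{-1}$ can fail on $\{w=0\}$ in the degenerate situation $\|fw\|_{\i,\rn}=0$, since the right-hand side is then $0\cdot\i=0$ by convention while $f$ may be positive there. The conclusion is unaffected --- when $A<\i$ the weight $v$ must vanish a.e.\ on $\{t:\esup_{\Bt}w^{-1}=\i\}$, which forces $(Sf)v=0$ a.e.\ for any such $f$ --- but this case needs a sentence of its own rather than an appeal to the conventions alone.
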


\begin{thm}\label{lem.2.800000000}
Let $0 < q \le \i$, $v \in \mp^+\I$ and $w \in \mp^+(\rn)$. Denote
by
$$
(S^*f) (t) : = \esup_{x \in \Btd} f(x),  \qq f \in \mp^+(\rn),\qq
t \ge 0.
$$
Then the inequality
\begin{equation*}
\left\|(S^*f)v\right\|_{q,\I}\leq c \left\|fw\right\|_{\i,\rn}
\end{equation*}
holds for all $f \in \mp^+(\rn)$ if and only if
$$
\bigg\| v(r)\bigg(\esup_{x \in \Brd}w(x)^{-1}\bigg)\bigg\|_{q,\I}
< \i,
$$
and
\begin{equation*}
\sup_{f\in \mp^+(\rn)} \left\|(S^*f)v\right\|_{q,\I} /
\left\|fw\right\|_{\i,\rn} \ap \bigg\| v(r)\bigg(\esup_{x \in
\Brd}w(x)^{-1}\bigg)\bigg\|_{q,\I}.
\end{equation*}
\end{thm}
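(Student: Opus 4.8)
The plan is to prove Theorem~\ref{lem.2.800000000} by a direct argument, paralleling (and essentially being the ``complementary'' version of) Theorem~\ref{lem.2.8000000}. Write $W(r) := \esup_{x \in \Brd} w(x)^{-1}$ for $r > 0$; note that $W$ is nonincreasing in $r$ (since $\Brd = \rn \setminus \Br$ shrinks as $r$ grows), in contrast to the nondecreasing function $r \mapsto \esup_{x \in \Br} w(x)^{-1}$ appearing in the previous theorem. The quantity to be characterized is $A := \| v(r) W(r) \|_{q,\I}$.

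First I would prove sufficiency, i.e.\ that $\|(S^*f)v\|_{q,\I} \le A \, \|fw\|_{\i,\rn}$. Fix $f \in \mp^+(\rn)$ and set $\lambda := \|fw\|_{\i,\rn} = \esup_{x \in \rn} f(x) w(x)$; we may assume $\lambda < \i$. Then for a.e.\ $x \in \rn$ we have $f(x) \le \lambda w(x)^{-1}$, hence for every $t > 0$,
\begin{equation*}
(S^*f)(t) = \esup_{x \in \Btd} f(x) \le \lambda \, \esup_{x \in \Btd} w(x)^{-1} = \lambda\, W(t).
\end{equation*}
Multiplying by $v(t)$ and taking the $L_q\I$ quasi-norm gives $\|(S^*f)v\|_{q,\I} \le \lambda \, \|vW\|_{q,\I} = A\,\lambda$, which is the desired bound (and in particular shows the supremum defining the best constant is $\le A$).

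For necessity I would test the inequality against functions that nearly saturate the pointwise bound $f(x) \le \lambda w(x)^{-1}$ on large regions. The natural first guess is $f_0(x) := w(x)^{-1}$, for which $\|f_0 w\|_{\i,\rn} = 1$ and $(S^*f_0)(t) = W(t)$ exactly, so that finiteness of the left-hand side for $f_0$ already forces $A = \|vW\|_{q,\I} < \i$ and yields $A \le \sup_f \|(S^*f)v\|_{q,\I}/\|fw\|_{\i,\rn}$. One should, however, check the borderline case where $w(x)^{-1}$ is not itself in the relevant local class; the standard fix is to work with truncations $f_N(x) := \min\{w(x)^{-1}, N\}\chi_{B(0,N)}(x)$, observe $\|f_N w\|_{\i,\rn} \le 1$, note $(S^*f_N)(t) \uparrow W(t)$ (or a quantity $\ge c\,W(t)$ on $(0,N)$) as $N \to \i$, and pass to the limit using the monotone convergence theorem in the $L_q\I$ quasi-norm. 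The main obstacle — though a mild one — is exactly this measurability/approximation bookkeeping for the essential-supremum operator $S^*$: one must make sure $(S^*f_N)(t)$ genuinely approximates $W(t)$ from below for each fixed $t$, which follows because $\esup_{x \in \Btd}$ of an increasing sequence of functions is the increasing limit of the essential suprema. Combining the two halves gives $\sup_f \|(S^*f)v\|_{q,\I}/\|fw\|_{\i,\rn} \ap A$ with absolute constants (in fact with constant $1$ in both directions), completing the proof.
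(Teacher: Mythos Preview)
The paper does not actually supply a proof of Theorem~\ref{lem.2.800000000}; it is listed in Section~\ref{sec2} among the preliminary Hardy-type results that are stated without proof (the surrounding text attributes the $H$, $H^*$ cases to \cite{DrabHeinKuf} and the reverse inequalities to \cite{GogMus2}, while the $S$, $S^*$ statements are simply recorded as easy companions). So there is nothing in the paper to compare against beyond the bare statement.

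Your direct argument is correct and is exactly the natural one. The sufficiency step is immediate from $f\le \|fw\|_{\infty,\rn}\,w^{-1}$ a.e. For necessity, the test function $f_0=w^{-1}$ already gives equality $(S^*f_0)(t)=W(t)$ and $\|f_0w\|_{\infty,\rn}\le 1$ under the paper's conventions ($0\cdot\infty=0$), so the truncation $f_N=\min\{w^{-1},N\}\chi_{B(0,N)}$ is only needed if one insists on testing with finite-valued, compactly supported functions; your monotone-convergence justification for $(S^*f_N)(t)\uparrow W(t)$ is correct since $B(0,N)\setminus B(0,t)\uparrow \Btd$ and $\min\{w^{-1},N\}\uparrow w^{-1}$. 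As you note, both inequalities hold with constant~$1$, so one in fact gets equality of the best constant with $\|vW\|_{q,\I}$, which is slightly sharper than the $\approx$ in the statement.
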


For the convenience of the reader we repeat the relevant material
from \cite{GogMus2} without proofs, thus making our exposition
self-contained.

%We shall use also the following multidimensional reverse Hardy
%inequalities from \cite{GogMus2}.

Let $\vp$ be non-decreasing and finite function on the interval $I
: = (a,b)\subseteq \R$. We assign to $\vp$ the function $\la$
defined on subintervals of $I$ by
\begin{align}
\la ([y,z]) & = \vp(z+) - \vp(y-), \notag\\
\la ([y,z)) & = \vp(z-) - \vp(y-), \label{Notat.and.prelim.eq.1.4}\\
\la ((y,z]) & = \vp(z+) - \vp(y+), \notag\\
\la ((y,z)) & = \vp(z-) - \vp(y+). \notag
\end{align}
$\la$ is a non-negative, additive and regular function of
intervals. Thus (cf. \cite{r}, Chapter 10), it admits a unique
extension to a non-negative Borel measure $\la$ on $I$.

Note also that the associated Borel measure can be determined,
e.g., only by putting
$$
\la ([y,z]) = \vp(z+) - \vp(y-) \qq \mbox{for any}\qq [y,z]\subset
I
$$
(since the Borel subsets of $I$ can be generated by subintervals
$[y,z]\subset I$).

If $J\subseteq I$, then the Lebesgue-Stieltjes integral $\int_J
f\,d\vp$ is defined as $\int_J f\,d\la$. We shall also use the
Lebesgue-Stieltjes integral $\int_J f\,d\vp$ when $\vp$ is a
non-increasing and finite on the interval $I$. In such a case we
put
$$
\int_J f\,d\vp : = - \int_J f\,d(-\vp).
$$

We adopt the following conventions.
\begin{conv} \label{conv:3.3}
Let $I=(a,b)\subseteq \R$, $f:I\to [0,\infty]$ and $h:I\to
[0,\i]$. Assume that $h$ is non-decreasing and left-continuous on
$I$. If $h:I\to [0,\infty)$, then the symbol $\int_{I}f\,dh$ means
the usual Lebesgue-Stieltjes integral (with the measure $\la$
associated to $h$ is given by $\la([\a,\b))= h(\b)-h(\a)$ if $[\a,
\b)\subset (a,b)$ -- cf. \eqref{Notat.and.prelim.eq.1.4}).
However, if $h = \infty$ on some subinterval $(c,b)$  with $c\in
I$, then we define $\int_{I}f\,dh$ only if $f=0$ on $[c,b)$ and we
put
$$\int_{I}f\,dh=\int_{(a,c)}f\,dh.$$
\end{conv}
\begin{conv}
Let $I=(a,b)\subseteq \R$, $f:I\to [0,+\infty]$ and $h:I\to
[-\infty,0]$. Assume that $h$ is non-decreasing and
right-continuous on $I$. If $h:I\to (-\infty,0]$, then the symbol
$\int_{I}f\,dh$ means the usual Lebesgue-Stieltjes integral.
However, if $h= -\infty$ on some subinterval
 $(a,c)$  with $c\in I$, then we define  $\int_{I}f\,dh$ only if $f=0$ on
$(a,c]$ and we put
$$\int_{I}f\,dh=\int_{(c,b)}f\,dh.$$
\end{conv}

\begin{thm}\label{T:5.1}
Let  $w \in \mp^+(\rn)$ and $u \in \mp^+\I$ be such that
$\|u\|_{q,(t,\i)} <\infty$ for all $t\in (0,\i)$.

{\rm (a)} Assume that $0< q\le p\le1$. Then
\begin{equation} \label{5.1}
\Vert gw \Vert _{p,\rn}\leq c \left\Vert(Hg)u\right\Vert _{q,\I}
\end{equation}
 holds for all $g \in \mp^+(\rn)$ if and only if
\begin{equation} \label{5.2}
C(p,q) : =\sup _{t\in (0,\i)}\| w \|_{{p'},\Btd} \|
u\|_{q,(t,\i)}^{-1} <\infty .
\end{equation}
The best possible constant in \eqref{5.1}, that is,
\begin{equation*}
D(p,q) : = \sup _{g \in \mp^+(\rn)}\| gw \|_{p,\rn} / \|
(Hg)u\|_{q,\I}
\end{equation*}
satisfies $D(p,q)\approx C(p,q)$.

{\rm (b)} Let $0< p\le1$, $p<q\le\infty$ and $1/r = 1 / p - 1 /
q$. Then \eqref{5.1}  holds if and only if
$$
C(p,q) : =\left( \int_{\I} \| w \|_{{p'},\Btd}^{r}
\,d\left(\|u\|_{q,(t-,\i) }^{-r}\right) \right) ^{1 / {r}}
+\frac{\|w \|_{{p'},\rn}} {\|u\|_{q,\I}}<\infty,
$$
and $D(p,q)\approx C(p,q)$, where
\begin{equation*}
\|u\|_{q,(t-,\i)}:=\lim_{s\rw t-}\|u\|_{q,(s,\i)}, \qquad t\in \I.
\end{equation*}
\end{thm}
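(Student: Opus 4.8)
The plan is to reduce the $n$-dimensional inequality \eqref{5.1} to a one-dimensional reverse Hardy inequality by freezing the radial profile of $g$ and applying an (ordinary) H\"older inequality on each sphere, and then to settle the resulting one-dimensional problem --- either by quoting the corresponding one-dimensional result from \cite{GogMus2}, or directly by a discretization argument, which I sketch below. I describe the argument for $0<p<1$; the case $p=1$ (where $p'=\i$) is analogous, with essential-supremum norms on spheres replacing the $L_{p'}$-norms. Throughout, $H$, $\mp^+(\rn)$ are as in the statement and $p'=p/(1-p)$.

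Fix $g\in\mp^+(\rn)\cap L_1^{\loc}(\rn)$ and put $F:=Hg$; then $F$ is non-decreasing, locally absolutely continuous, $F(0)=0$, and $F'(t)=\int_{\{|x|=t\}}g\,d\mathcal{H}^{n-1}$ for a.e.\ $t>0$. Writing the $L_p$-quasi-norm in polar coordinates and using H\"older's inequality with exponents $1/p$ and $1/(1-p)$ on each sphere $\{|x|=t\}$,
\begin{equation*}
\|gw\|_{p,\rn}^p=\int_0^{\i}\Big(\int_{\{|x|=t\}}g^pw^p\,d\mathcal{H}^{n-1}\Big)dt\le\int_0^{\i}(F'(t))^p\,\psi(t)^{1-p}\,dt,\qquad\psi(t):=\int_{\{|x|=t\}}w^{p'}\,d\mathcal{H}^{n-1}.
\end{equation*}
Conversely, given any non-decreasing, locally absolutely continuous $F$ with $F(0)=0$, the function $g(x):=F'(|x|)\,\psi(|x|)^{-1}w(x)^{p'}$ belongs to $\mp^+(\rn)$, satisfies $Hg=F$, and --- using $p(p'+1)=p'$ --- turns the last inequality into an equality. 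Hence the best constant $D(p,q)$ in \eqref{5.1} equals the smallest $c$ for which
\begin{equation*}
\Big(\int_0^{\i}(F'(t))^p\,\psi(t)^{1-p}\,dt\Big)^{1/p}\le c\,\|Fu\|_{q,\I}
\end{equation*}
holds for all non-decreasing, locally absolutely continuous $F$ with $F(0)=0$; and, since $\int_{\Btd}w^{p'}=\int_t^{\i}\psi$, the $n$-dimensional quantity $C(p,q)$ is exactly the condition governing this one-dimensional inequality. It thus remains to establish the one-dimensional equivalence $c\ap C(p,q)$.

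For part (a), $0<q\le p\le1$, assume $C(p,q)<\i$ and discretize $F$: choose $t_k$ with $F(t_k)=2^k$ (with the standard modifications where the range of $F$ is bounded). Since $\int_{t_k}^{t_{k+1}}F'=2^k$, H\"older on $[t_k,t_{k+1}]$ together with the condition $(\int_t^{\i}\psi)^{1/p'}\le C(p,q)\,\|u\|_{q,(t,\i)}$ and the identity $p'(1-p)=p$ give
\begin{equation*}
\int_{t_k}^{t_{k+1}}(F')^p\psi^{1-p}\le 2^{kp}\Big(\int_{t_k}^{\i}\psi\Big)^{1-p}\le C(p,q)^p\,2^{kp}\,\|u\|_{q,(t_k,\i)}^p.
\end{equation*}
Summing in $k$, using $\sum_k a_k^p\le\big(\sum_k a_k^q\big)^{p/q}$ (valid as $q\le p$) and then the elementary bound $\sum_k2^{kq}\|u\|_{q,(t_k,\i)}^q\ls\|Fu\|_{q,\I}^q$ (because $F\ge2^k$ on $(t_k,\i)$), one gets $\big(\int_0^{\i}(F')^p\psi^{1-p}\big)^{1/p}\ls C(p,q)\,\|Fu\|_{q,\I}$, i.e.\ $D(p,q)\ls C(p,q)$. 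For the reverse inequality, for each $t>0$ one tests \eqref{5.1} with $g$ equal to $w^{p'}$ on $\Btd$ and to $0$ on $\Bt$; since $Hg\le\|w\|_{p',\Btd}^{p'}$ on $\I$ this yields $D(p,q)\gs\|w\|_{p',\Btd}\,\|u\|_{q,(t,\i)}^{-1}$, hence $D(p,q)\gs C(p,q)$.

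For part (b), $p<q\le\i$ and $1/r=1/p-1/q$, the same discretization applies, but now the summation in $k$ is genuinely an $\ell^r$-sum and does not collapse to a supremum. Starting from $\int_{t_k}^{t_{k+1}}(F')^p\psi^{1-p}\le 2^{kp}\|w\|_{p',\dual{B(0,t_k)}}^{p}$ one performs an Abel summation, using that $t\mapsto\|w\|_{p',\Btd}$ is non-increasing, that $t\mapsto\|u\|_{q,(t,\i)}^{-1}$ is non-decreasing, and the bound $2^k\ls\|Fu\|_{q,\I}\,\|u\|_{q,(t_k,\i)}^{-1}$; the resulting sum is then dominated by $C(p,q)^p\|Fu\|_{q,\I}^p$, the Lebesgue--Stieltjes integral $\int_{\I}\|w\|_{p',\Btd}^r\,d\big(\|u\|_{q,(t-,\i)}^{-r}\big)$ arising from the body of the sum and the term $\|w\|_{p',\rn}/\|u\|_{q,\I}$ from the contribution of $k\to-\i$. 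Necessity is obtained by testing against staircase competitors $F$ subordinate to an arbitrary partition of $\I$ and optimizing. I expect the main obstacle to be exactly this bookkeeping in part (b) --- identifying the Abel sum with the Lebesgue--Stieltjes integral, correctly handling the left limits $\|u\|_{q,(t-,\i)}$ and the endpoint term via Convention \ref{conv:3.3} and \eqref{Notat.and.prelim.eq.1.4}, and treating $q=\i$ separately (where the sum is a supremum and the Stieltjes integral degenerates); the reduction to one dimension and part (a) are routine once the H\"older trick on spheres is in place.
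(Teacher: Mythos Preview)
The paper does not prove this theorem at all: it is quoted verbatim from \cite{GogMus2} (see the sentence immediately preceding Theorem~\ref{T:5.1}, ``we repeat the relevant material from \cite{GogMus2} without proofs''). So there is no in-paper argument to compare against; anything you write is already more than what the paper supplies.

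Your reduction to a one-dimensional reverse Hardy inequality via H\"older on spheres is correct and is, in fact, the natural route (and presumably the one taken in \cite{GogMus2}, whose title is ``The multidimensional reverse Hardy inequalities''). The identity $p(p'+1)=p'$ makes the extremal choice $g(x)=F'(|x|)\,\psi(|x|)^{-1}w(x)^{p'}$ sharp, so the $n$-dimensional best constant coincides with the one-dimensional one, and $\|w\|_{p',\Btd}^{p'}=\int_t^\infty\psi$ transfers the condition exactly. Your discretization for part~(a) is the standard blocking argument and is correct: the key step $\sum_k 2^{kq}\|u\|_{q,(t_k,\infty)}^q\lesssim\|Fu\|_{q,\I}^q$ follows by Fubini and the geometric sum $\sum_{k\le m}2^{kq}\approx 2^{mq}\approx F(t)^q$ on $(t_m,t_{m+1}]$. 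The test function $g=w^{p'}\chi_{\Btd}$ for necessity is also right; note it requires $\|w\|_{p',\Btd}<\infty$, which you may assume since otherwise $C(p,q)=\infty$ and there is nothing to prove. Your self-assessment of part~(b) is accurate: the Abel summation and identification with the Lebesgue--Stieltjes integral against $d(\|u\|_{q,(t-,\infty)}^{-r})$ is the only genuinely delicate point, and the endpoint case $q=\infty$ does need separate handling. None of this is a gap --- just bookkeeping you have correctly flagged.
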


\begin{thm}\label{T:5.100}
Let  $w \in \mp^+(\rn)$ and $u \in \mp^+\I$ be such that
$\|u\|_{q,(0,t)} <\infty$ for all $t\in (0,\i)$.

{\rm (a)} Assume that $0< q\le p\le1$. Then
\begin{equation} \label{5.100}
\Vert gw \Vert _{p,\rn}\leq c \left\Vert(H^*g)u\right\Vert
_{q,(0,\i)}
\end{equation}
 holds for all $g \in \mp^+(\rn)$ if and only if
\begin{equation} \label{5.200}
C^*(p,q) : =\sup _{t\in (0,\i)}\| w \|_{{p'},\Bt} \|
u\|_{q,(0,t)}^{-1} <\infty .
\end{equation}
The best possible constant in \eqref{5.100}, that is,
\begin{equation*}
D^*(p,q) : = \sup _{g \in \mp^+(\rn)}\| gw \|_{p,\rn} / \|
(H^*g)u\|_{q,\I}
\end{equation*}
satisfies $D^*(p,q)\approx C^*(p,q)$.

{\rm (b)} Let $0< p\le1$, $p<q\le\infty$ and $1/r = 1 / p - 1 /
q$. Then \eqref{5.100}  holds if and only if
$$
C^*(p,q) : =\left( \int_{\I} \| w \|_{{p'},\Bt)}^{r}
\,d\left(-\|u\|_{q,(0,t+) }^{-r}\right) \right) ^{1 / {r}}
+\frac{\|w \|_{{p'},\rn}} {\|u\|_{q,\I}}<\infty,
$$
and $D^*(p,q)\approx C^*(p,q)$, where
\begin{equation*}
\|u\|_{q,(0,t+)}:=\lim_{s\rw t+}\|u\|_{q,(0,s)}, \qquad t\in \I.
\end{equation*}
\end{thm}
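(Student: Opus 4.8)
The plan is to derive Theorem~\ref{T:5.100} from Theorem~\ref{T:5.1} by means of the inversion $x\mapsto x/|x|^2$ on $\rn\backslash\{0\}$ together with the reciprocal substitution $t\mapsto 1/t$ on $\I$; these maps interchange the balls $\Bt$ with their complements $\Btd$, and hence turn the operator $H^*$ into $H$. Concretely, given $g\in\mp^+(\rn)$ put $\tilde g(y):=|y|^{-2n}\,g\big(y/|y|^2\big)$. Since $x\mapsto x/|x|^2$ is an involutive diffeomorphism of $\rn\backslash\{0\}$ whose Jacobian equals $|x|^{-2n}$, the assignment $g\mapsto\tilde g$ is a (self-inverse) bijection of $\mp^+(\rn)$ onto itself, and the change of variables $x=y/|y|^2$ gives, for every $t>0$,
\begin{equation*}
(H^*g)(t)=\int_{\Btd}g(x)\,dx=\int_{B(0,1/t)}\tilde g(y)\,dy=(H\tilde g)(1/t).
\end{equation*}

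Next introduce the transformed weights $\tilde w(y):=|y|^{-2n/p'}\,w\big(y/|y|^2\big)$ and $\tilde u(s):=s^{-2/q}\,u(1/s)$, with the conventions of Convention~\ref{Notat.and.prelim.conv.1.1} in force (so that $s^{-2/\i}=1$ and $|y|^{-2n/\i}=1$ when $q=\i$, resp.\ $p=1$). Using the same two substitutions — and the identity $p/p'=1-p$ valid for $0<p<1$ — one verifies the norm identities
\begin{equation*}
\|gw\|_{p,\rn}=\|\tilde g\,\tilde w\|_{p,\rn},\qquad \|(H^*g)u\|_{q,\I}=\|(H\tilde g)\,\tilde u\|_{q,\I}.
\end{equation*}
Consequently \eqref{5.100} holds for all $g\in\mp^+(\rn)$ if and only if \eqref{5.1} holds for all $\tilde g\in\mp^+(\rn)$ with $(w,u)$ replaced by $(\tilde w,\tilde u)$, and $D^*(p,q)$ equals the best constant $D(p,q)$ for the data $(\tilde w,\tilde u)$. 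Moreover the standing hypothesis $\|u\|_{q,(0,t)}<\i$ for all $t>0$ is, via $\|\tilde u\|_{q,(t,\i)}=\|u\|_{q,(0,1/t)}$, exactly the hypothesis $\|\tilde u\|_{q,(t,\i)}<\i$ for all $t>0$ needed in Theorem~\ref{T:5.1}.

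It then remains to invoke Theorem~\ref{T:5.1} for the data $(\tilde w,\tilde u)$ and to re-express the resulting conditions in terms of $w$ and $u$. In case~(a), $0<q\le p\le1$, Theorem~\ref{T:5.1}(a) asserts finiteness of $\sup_{t>0}\|\tilde w\|_{p',\Btd}\,\|\tilde u\|_{q,(t,\i)}^{-1}$; since $\|\tilde w\|_{p',\Btd}=\|w\|_{p',B(0,1/t)}$ and $\|\tilde u\|_{q,(t,\i)}=\|u\|_{q,(0,1/t)}$, the change of parameter $\tau=1/t$ identifies this quantity with $C^*(p,q)$ from \eqref{5.200}. In case~(b), $0<p\le1$, $p<q\le\i$, Theorem~\ref{T:5.1}(b) produces
\begin{equation*}
\left(\int_{\I}\|\tilde w\|_{p',\Btd}^{r}\,d\big(\|\tilde u\|_{q,(t-,\i)}^{-r}\big)\right)^{1/r}+\frac{\|\tilde w\|_{p',\rn}}{\|\tilde u\|_{q,\I}};
\end{equation*}
here $\|\tilde w\|_{p',\rn}=\|w\|_{p',\rn}$, $\|\tilde u\|_{q,\I}=\|u\|_{q,\I}$, and $\|\tilde u\|_{q,(t-,\i)}=\|u\|_{q,(0,(1/t)+)}$, while performing the substitution $\tau=1/t$ in the Lebesgue--Stieltjes integral turns the non-decreasing generating function $t\mapsto\|\tilde u\|_{q,(t-,\i)}^{-r}$ into the non-increasing function $\tau\mapsto\|u\|_{q,(0,\tau+)}^{-r}$, so that $d\big(\|\tilde u\|_{q,(t-,\i)}^{-r}\big)$ corresponds precisely to $d\big(-\|u\|_{q,(0,\tau+)}^{-r}\big)$; the displayed expression therefore equals $C^*(p,q)$ of case~(b). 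Combining these identifications with $D^*(p,q)=D(p,q)$ (for the transformed data) yields $D^*(p,q)\approx C^*(p,q)$ in both cases.

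The part that will require real care is this last one: the orientation-reversing character of $t\mapsto 1/t$ must be matched against the minus sign in $d\big(-\|u\|_{q,(0,t+)}^{-r}\big)$, and the one-sided limits ($t-$ for the transformed weight versus $(1/t)+$ for the original) must be reconciled with the left-/right-continuity conventions adopted before Theorems~\ref{T:5.1} and \ref{T:5.100}. The exponent bookkeeping in the weight substitutions is, by contrast, entirely routine, as is the verification of the degenerate cases $p=1$ (where $p'=\i$) and $q=\i$.
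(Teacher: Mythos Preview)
The paper does not actually prove Theorem~\ref{T:5.100}: both Theorems~\ref{T:5.1} and \ref{T:5.100} are quoted from \cite{GogMus2} without proof (see the sentence preceding Theorem~\ref{T:5.1}). There is therefore no in-paper argument to compare against.

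Your reduction is correct and is a natural way to obtain the $H^*$ result from the $H$ result. The inversion $x\mapsto x/|x|^2$ has Jacobian of absolute value $|x|^{-2n}$ and carries $\Btd$ to $B(0,1/t)$, so with $\tilde g(y)=|y|^{-2n}g(y/|y|^2)$ one has $(H^*g)(t)=(H\tilde g)(1/t)$ exactly as you write; the weight choices $\tilde w(y)=|y|^{-2n/p'}w(y/|y|^2)$ and $\tilde u(s)=s^{-2/q}u(1/s)$ then yield the claimed norm identities by direct change of variables, including the endpoint cases $p=1$ and $q=\i$. The only point requiring genuine care is the one you already isolate: pushing the Lebesgue--Stieltjes integral in case~(b) through the orientation-reversing substitution $\tau=1/t$. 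Since $t\mapsto 1/t$ is a homeomorphism of $\I$ sending $(s,t)$ to $(1/t,1/s)$, the Borel measure associated with the non-decreasing, left-continuous function $t\mapsto\|\tilde u\|_{q,(t-,\i)}^{-r}$ is the pushforward of the Borel measure associated with the non-increasing, right-continuous function $\tau\mapsto\|u\|_{q,(0,\tau+)}^{-r}$ (both assign the same mass to corresponding intervals), which accounts simultaneously for the sign in $d(-\,\cdot\,)$ and for the switch from $t-$ to $\tau+$. With that verified, $C(p,q)$ for $(\tilde w,\tilde u)$ coincides with $C^*(p,q)$ for $(w,u)$, and the conclusion follows.
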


\begin{rem} \label{R:5.5}
Let $q<\infty$ in Theorems~\ref{T:5.1} and \ref{T:5.100}. Then
$$
\| u\|_{q,(t-,\i)}=\| u\|_{q,(t,\i)}\quad \mbox{and} \qq
\|u\|_{q,(0,t+) } = \|u\|_{q,(0,t)} \qq \mbox{for all} \quad t\in
\I,
$$
which implies that
$$
C(p,q) = \left( \int_{(a,b)} \Vert w \Vert _{{p'},\Btd}^{r}
\,d\left(\Vert u\Vert _{q,(t,\i)}^{-r}\right) \right) ^{1 /r}
+\frac{\Vert w \Vert _{{p'},\rn}} {\Vert u\Vert_{q,\I}},
$$
and
$$
C^*(p,q) = \left( \int_{\I} \| w \|_{{p'},\Bt)}^{r}
\,d\left(-\|u\|_{q,(0,t) }^{-r}\right) \right) ^{1 / {r}}
+\frac{\|w \|_{{p'},\rn}} {\|u\|_{q,\I}}.
$$
\end{rem}

\section{Characterizations of $L_{p_1}(\rn,v_1) \hra LM_{p_2 \t,
\o}(\rn,v_2)$ and $L_{p_1}(\rn,v_1) \hra \dual{LM}_{p_2 \t,
\o}(\rn,v_2)$}\label{sec3}

In this section we characterize \eqref{emb1} and \eqref{emb2}.
\begin{thm}\label{main1}
Let $0 < p_1 \le \i$, $0 < p_2 < \i$, $0 < \t \le \i$, $v_1,\,v_2
\in \W(\rn)$ and $\o \in \O_{\t}$.

{\rm (i)} If $p_2 < p_1 \le \t < \i$, then
\begin{equation*}
\|\Id\|_{L_{p_1}(\rn,v_1) \rw LM_{p_2 \, \t, \o}(\rn,v_2)} \ap
\sup_{t
> 0} \|\o\|_{\t,(t,\i)}
\left\|v_1^{-1/{p_1}}v_2^{1/{p_2}}\right\|_{\frac{p_1 p_2}{p_1 -
p_2},\Bt}
\end{equation*}
uniformly in $\o \in \O_{\t}$.

{\rm (ii)} If $p_2 < p_1 < \i$, $0 < \t < p_1$, then
\begin{equation*}
\|\Id\|_{L_{p_1}(\rn,v_1) \rw LM_{p_2 \,\t, \o}(\rn,v_2)} \ap
\left\|\|\o\|_{\t,(t,\i)}^{{\t}/{p_1}}\left\|v_1^{-1/{p_1}}v_2^{1/{p_2}}\right\|_{\frac{p_1
p_2}{p_1 - p_2},\Bt}\right\|_{\frac{\t p_1}{p_1 - \t},\o^{\t},\I}
\end{equation*}
uniformly in $\o \in \O_{\t}$.

{\rm (iii)} If $p_2 < p_1 < \i$, $\t = \i$, then
\begin{equation*}
\|\Id\|_{L_{p_1}(\rn,v_1) \rw LM_{p_2 \,\t, \o}(\rn,v_2)} \ap
\sup_{t > 0} \|\o\|_{\t,(t,\i)}
\left\|v_1^{-1/{p_1}}v_2^{1/{p_2}}\right\|_{\frac{p_1 p_2}{p_1 -
p_2},\Bt}
\end{equation*}
uniformly in $\o \in \O_{\t}$.

{\rm (iv)} If $p_1 = \t = \i$, $0 < p_2 < \i$, then
\begin{equation*}
\|\Id\|_{L_{p_1}(\rn,v_1) \rw LM_{p_2 \,\t, \o}(\rn,v_2)} \ap
\sup_{t > 0} \|\o\|_{\t,(t,\i)}
\left\|v_1^{-1}v_2^{1/{p_2}}\right\|_{p_2,\Bt}
\end{equation*}
uniformly in $\o \in \O_{\t}$.

{\rm (v)} If $p_1 = \i$, $0 < p_2 < \i$, $0 < \t < \i$, then
\begin{equation*}
\|\Id\|_{L_{p_1}(\rn,v_1) \rw LM_{p_2 \,\t, \o}(\rn,v_2)} \ap
\left\| \o(t) \left\| v_1^{-1}v_2^{1/{p_2}}  \right\|_{p_2,\Bt}
\right\|_{\t,\I}
\end{equation*}
uniformly in $\o \in \O_{\t}$.

{\rm (vi)} If $p : = p_1 = p_2 \le \t < \i$, then
\begin{equation*}
\|\Id\|_{L_{p_1}(\rn,v_1) \rw LM_{p_2 \,\t, \o}(\rn,v_2)} \ap
\sup_{t > 0} \|\o\|_{\t,(t,\i)} \| v_1^{-1 / p} v_2^{ 1/ p}
\|_{\i,\Bt}
\end{equation*}
uniformly in $\o \in \O_{\t}$.

{\rm (vii)} If $0 < p : = p_1 = p_2 < \i$,  $0 < \t < p$, then
\begin{equation*}
\|\Id\|_{L_{p_1}(\rn,v_1) \rw LM_{p_2 \,\t, \o}(\rn,v_2)} \ap
\left\|\|\o\|_{\t,(t,\i)}^{\t/{p}}\left\|v_1^{-1/{p}}v_2^{1/{p}}\right\|_{\i,\Bt}\right\|_{\frac{\t
p}{p - \t},\o^{\t},\I}
\end{equation*}
uniformly in $\o \in \O_{\t}$.

{\rm (viii)} If $0 < p : = p_1 = p_2 < \i$, $\t = \i$, then
\begin{equation*}
\|\Id\|_{L_{p_1}(\rn,v_1) \rw LM_{p_2 \,\t, \o}(\rn,v_2)} \ap
\sup_{t > 0} \|\o\|_{\t,(t,\i)} \| v_1^{- 1 / p} v_2^{ 1 / p}
\|_{\i,\Bt}
\end{equation*}
uniformly in $\o \in \O_{\t}$.

{\rm (ix)} If $p : = p_1 = p_2 = \i$, $0 < \t < \i$, then
\begin{equation*}
\|\Id\|_{L_{p_1}(\rn,v_1) \rw LM_{p_2 \,\t, \o}(\rn,v_2)} \ap
\left\| \o(t) \left\| v_1^{-1}v_2 \right\|_{\i,\Bt}
\right\|_{\t,\I}
\end{equation*}
uniformly in $\o \in \O_{\t}$.

\end{thm}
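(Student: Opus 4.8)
The plan is to reduce the embedding $L_{p_1}(\rn,v_1)\hra LM_{p_2\,\t,\o}(\rn,v_2)$, in each of the listed parameter regimes, to one of the weighted Hardy-type inequalities of Theorems~\ref{lem.2.8} and \ref{lem.2.8000000}, and then to rewrite the resulting best constant in the asserted form. By Definition~\ref{defi.2.1}, the norm $\|\Id\|_{L_{p_1}(\rn,v_1)\rw LM_{p_2\,\t,\o}(\rn,v_2)}$ is the least $c$ for which $\bigl\|\o(r)\,\|f\|_{p_2,v_2,\Br}\bigr\|_{\t,\I}\le c\,\|f\|_{p_1,v_1,\rn}$ holds for all $f\in\mp^+(\rn)$; observe that in each of (i)--(ix) one has $p_2\le p_1$.

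Assume first $0<p_2<\i$. Since $v_1,v_2\in\W(\rn)$, the substitution $g:=|f|^{p_2}v_2$ is, up to null sets, a bijection of $\mp^+(\rn)$ onto itself, and $\|f\|_{p_2,v_2,\Br}^{p_2}=\int_{\Br}g=(Hg)(r)$ with $(Hg)(t):=\int_{\Bt}g$, while $\|f\|_{p_1,v_1,\rn}^{p_2}=\|g\|_{p_1/p_2,\,v_1v_2^{-p_1/p_2},\,\rn}$. Raising the displayed inequality to the power $p_2$ therefore turns it into
$$
\|Hg\|_{\t/p_2,\,\o^\t,\,\I}\le c^{p_2}\,\|g\|_{p_1/p_2,\,v_1v_2^{-p_1/p_2},\,\rn}\quad\text{if }\t<\i,
$$
$$
\|Hg\|_{\i,\,\o^{p_2},\,\I}\le c^{p_2}\,\|g\|_{p_1/p_2,\,v_1v_2^{-p_1/p_2},\,\rn}\quad\text{if }\t=\i,
$$
whence $\|\Id\|=B(p_1/p_2,\t/p_2)^{1/p_2}$, where $B$ is the best constant of Theorem~\ref{lem.2.8} for $p=p_1/p_2$, $q=\t/p_2$, $v=\o^\t$ (resp.\ $\o^{p_2}$ when $\t=\i$), $w=v_1v_2^{-p_1/p_2}$; since $p_2\le p_1$ we have $p\ge1$, so Theorem~\ref{lem.2.8} applies. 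When $p_1=\i$ one instead takes $h:=|f|^{p_2}$, $g:=hv_2$, which yields the right-hand side $c^{p_2}\|g\|_{\i,\,v_1^{p_2}v_2^{-1},\,\rn}$, i.e.\ $w=v_1^{p_2}v_2^{-1}$; and when $p_1=p_2=\i$ the substitution $g:=|f|v_2$ reduces the inequality to $\bigl\|\o(r)(Sg)(r)\bigr\|_{\t,\I}\le c\,\|g\,v_1v_2^{-1}\|_{\i,\rn}$ with $(Sg)(t):=\esup_{\Bt}g$, which is exactly the inequality of Theorem~\ref{lem.2.8000000}.

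It remains to pair each regime with the appropriate case of Theorem~\ref{lem.2.8}: the regime $p_2<p_1\le\t<\i$ corresponds to case~(a); $p_2<p_1<\i$, $\t<p_1$ to (b); $p_2<p_1<\i$, $\t=\i$ to (c); $p_1=\t=\i$ to (d); $p_1=\i$, $\t<\i$ to (e); $p_1=p_2\le\t<\i$ to (f); $p_1=p_2<\i$, $\t<p_1$ to (g); $p_1=p_2<\i$, $\t=\i$ to (h); while $p_1=p_2=\i$ is handled by Theorem~\ref{lem.2.8000000} and yields (ix). In every instance the relevant equivalence ($B\ap A$, resp.\ its analogue in Theorem~\ref{lem.2.8000000}) holds with constants depending only on $p$ and $q$, hence only on $p_1,p_2,\t$, which gives uniformity in $\o\in\O_{\t}$. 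To pass from $A(p,q)^{1/p_2}$ to the displayed right-hand sides one uses $\bigl(\int_t^\i\o^\t\bigr)^{1/q}=\|\o\|_{\t,(t,\i)}^{\t/q}=\|\o\|_{\t,(t,\i)}^{p_2}$ and $\esup_{t<s<\i}\o(s)^{p_2}=\|\o\|_{\t,(t,\i)}^{p_2}$ for the $\o$-factors; $p'=p_1/(p_1-p_2)$, hence $(v_1v_2^{-p_1/p_2})^{1-p'}=(v_1^{-1/p_1}v_2^{1/p_2})^{\frac{p_1p_2}{p_1-p_2}}$ and $\bigl(\int_{\Bt}(v_1v_2^{-p_1/p_2})^{1-p'}\bigr)^{1/p'}=\bigl\|v_1^{-1/p_1}v_2^{1/p_2}\bigr\|_{\frac{p_1p_2}{p_1-p_2},\Bt}^{p_2}$; and $(v_1^{p_2}v_2^{-1})^{-1}=(v_1^{-1}v_2^{1/p_2})^{p_2}$, $(v_1v_2^{-1})^{-1}=(v_1^{-1/p}v_2^{1/p})^p$ for the weights arising when $p_1=\i$ or $p_1=p_2$; in cases~(b) and (g) one checks in addition, with $1/r=p_2/\t-p_2/p_1$ and $q'=q/(1-q)$, that the surviving powers regroup exactly into the weighted norms $\bigl\|\cdot\bigr\|_{\frac{\t p_1}{p_1-\t},\,\o^\t,\,\I}$ and $\bigl\|\cdot\bigr\|_{\frac{\t p}{p-\t},\,\o^\t,\,\I}$ of (ii) and (vii).

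The main obstacle is organizational rather than conceptual: one must track carefully the powers produced by the substitution $g=|f|^{p_2}v_2$ and its endpoint variants, and correctly match each parameter regime with the right case of Theorems~\ref{lem.2.8} and \ref{lem.2.8000000}. The delicate points are precisely the degenerate ranges $\t=\i$, $p_1=\i$ and $p_1=p_2=\i$, where the reduction step has to be adjusted as above; once the case correspondence is fixed, each of (i)--(ix) reduces to a routine computation of exponents.
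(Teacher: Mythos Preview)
Your proof is correct and follows essentially the same route as the paper: the substitution $g=|f|^{p_2}v_2$ reduces cases (i)--(viii) to Theorem~\ref{lem.2.8} with $p=p_1/p_2$, $q=\t/p_2$, $v=\o^{\t}$, $w=v_1v_2^{-p_1/p_2}$, and case (ix) is handled via Theorem~\ref{lem.2.8000000}. Your treatment is in fact slightly more careful than the paper's at the endpoint $p_1=\i$, where you correctly identify the weight as $v_1^{p_2}v_2^{-1}$ rather than writing the formal expression $v_1v_2^{-q_1}$ with $q_1=\i$.
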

\begin{proof}
{(i) - (viii).} Denote by
$$
q_1 : = {p_1} / {p_2}, \qq q_2 : = {\t} / {p_2}, \qq w_1 : = v_1
v_2^{-q_1}, \qq w_2 : = \o^{\t}.
$$
Since
\begin{align*}
\|\Id\|_{L_{p_1}(\rn,v_1) \rw LM_{p_2 \t, \o}(\rn,v_2)} & =
\sup_{f \not \sim \,0} \frac{\left\| \o(r) \|f\|_{p_2,v_2,\Br}
\right\|_{\t,(0,\infty)}}{\|f\|_{p_1,v_1,\rn}} \\
& = \left(\sup_{g \not \sim \,0} \frac{\left\| H (|g|)
\right\|_{q_2,w_2,\I} }{\|g\|_{q_1,w_1,\rn}}\right)^{1 / {p_2}},
\end{align*}
it remains to apply Theorem \ref{lem.2.8}.

{(ix).} Note that
\begin{align*}
\|\Id\|_{L_{\i}(\rn,v_1) \rw LM_{\i \t, \o}(\rn,v_2)} & = \sup_{f
\not \sim \,0} \frac{\left\| \o(r) \|f\|_{\i,v_2,\Br}
\right\|_{\t,(0,\infty)}}{\|f\|_{\i,v_1,\rn}} \\
& = \sup_{g \not \sim \,0} \frac{\left\| (S(|g|))\o
\right\|_{\t,\I} }{\|gw\|_{\i,\rn}},
\end{align*}
where $w = v_1 / v_2$. The statement follows by Theorem
\ref{lem.2.8000000}.
\end{proof}

%\begin{rem}
%Note that Theorem \ref{main1} is a generalization of Theorem
%\ref{BurGol00} to the weighted case.
%\end{rem}

\begin{thm}\label{main100}
Let $0 < p_1, \t \le \i$, $0 < p_2 < \i$, $v_1,\,v_2 \in \W(\rn)$
and $\o \in \Otd$.

{\rm (i)} If $p_2 < p_1 \le \t < \i$, then
\begin{equation*}
\|\Id\|_{L_{p_1}(\rn,v_1) \rw \dual{LM}_{p_2 \,\t, \o}(\rn,v_2)}
\ap \sup_{t > 0} \|\o\|_{\t,(0,t)}
\left\|v_1^{-1/{p_1}}v_2^{1/{p_2}}\right\|_{\frac{p_1 p_2}{p_1 -
p_2},\Btd}
\end{equation*}
uniformly in $\o \in \Otd$.

{\rm (ii)} If $p_2 < p_1 < \i$, $0 < \t < p_1$, then
\begin{equation*}
\|\Id\|_{L_{p_1}(\rn,v_1) \rw \dual{LM}_{p_2 \,\t, \o}(\rn,v_2)}
\ap
\left\|\|\o\|_{\t,(0,t)}^{{\t}/{p_1}}\left\|v_1^{-1/{p_1}}v_2^{1/{p_2}}\right\|_{\frac{p_1
p_2}{p_1 - p_2},\Btd}\right\|_{\frac{\t p_1}{p_1 - \t},\o^{\t},\I}
\end{equation*}
uniformly in $\o \in \Otd$.

{\rm (iii)} If $p_2 < p_1 < \i$, $\t = \i$, then
\begin{equation*}
\|\Id\|_{L_{p_1}(\rn,v_1) \rw \dual{LM}_{p_2 \,\t, \o}(\rn,v_2)}
\ap \sup_{t > 0} \|\o\|_{\t,(0,t)}
\left\|v_1^{-1/{p_1}}v_2^{1/{p_2}}\right\|_{\frac{p_1 p_2}{p_1 -
p_2},\Btd}
\end{equation*}
uniformly in $\o \in \Otd$.

{\rm (iv)} If $p_1 = \t = \i$, $0 < p_2 < \i$, then
\begin{equation*}
\|\Id\|_{L_{p_1}(\rn,v_1) \rw \dual{LM}_{p_2 \,\t, \o}(\rn,v_2)}
\ap \sup_{t > 0} \|\o\|_{\t,(0,t)}
\left\|v_1^{-1}v_2^{1/{p_2}}\right\|_{p_2,\Btd}
\end{equation*}
uniformly in $\o \in \Otd$.

{\rm (v)} If $p_1 = \i$, $0 < p_2 < \i$, $0 < \t < \i$, then
\begin{equation*}
\|\Id\|_{L_{p_1}(\rn,v_1) \rw \dual{LM}_{p_2 \,\t, \o}(\rn,v_2)}
\ap \left\| \o(t) \left\| v_1^{-1}v_2^{1/{p_2}}
\right\|_{p_2,\Btd} \right\|_{\t,\I}
\end{equation*}
uniformly in $\o \in \Otd$.

{\rm (vi)} If $p = p_1 = p_2 \le \t < \i$, then
\begin{equation*}
\|\Id\|_{L_{p_1}(\rn,v_1) \rw \dual{LM}_{p_2 \,\t, \o}(\rn,v_2)}
\ap \sup_{t > 0} \|\o\|_{\t,(0,t)} \bigg\| v_1^{-1 / p} v_2^{ 1/
p} \bigg\|_{\i,\Btd}
\end{equation*}
uniformly in $\o \in \Otd$.

{\rm (vii)} If $0 < p : = p_1 = p_2 < \i$,  $0 < \t < p$, then
\begin{equation*}
\|\Id\|_{L_{p_1}(\rn,v_1) \rw \dual{LM}_{p_2 \,\t, \o}(\rn,v_2)}
\ap
\left\|\|\o\|_{\t,(0,t)}^{\t/{p}}\left\|v_1^{-1/{p}}v_2^{1/{p}}\right\|_{\i,\Btd}\right\|_{\frac{\t
p}{p - \t},\o^{\t},\I}
\end{equation*}
uniformly in $\o \in \Otd$.

{\rm (viii)} If $0 < p : = p_1 = p_2 < \i$, $\t = \i$, then
\begin{equation*}
\|\Id\|_{L_{p_1}(\rn,v_1) \rw \dual{LM}_{p_2 \,\t, \o}(\rn,v_2)}
\ap \sup_{t > 0} \|\o\|_{\t,(0,t)} \bigg\| v_1^{- 1 / p} v_2^{ 1 /
p} \bigg\|_{\i,\Btd}
\end{equation*}
uniformly in $\o \in \Otd$.

{\rm (ix)} If $p : = p_1 = p_2 = \i$, $0 < \t < \i$, then
\begin{equation*}
\|\Id\|_{L_{p_1}(\rn,v_1) \rw \dual{LM}_{p_2 \,\t, \o}(\rn,v_2)}
\ap \left\| \o(t) \left\| v_1^{-1}v_2 \right\|_{\i,\Btd}
\right\|_{\t,\I}
\end{equation*}
uniformly in $\o \in \Otd$.
\end{thm}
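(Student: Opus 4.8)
The plan is to reduce Theorem~\ref{main100} to the one-dimensional complementary Hardy inequalities of Section~\ref{sec2}, exactly mirroring the proof of Theorem~\ref{main1} but with $H$ replaced by $H^*$ and balls $\Br$ replaced by their complements $\Brd$. Concretely, for cases (i)--(viii) I would set
$$
q_1 := p_1/p_2, \qquad q_2 := \t/p_2, \qquad w_1 := v_1 v_2^{-q_1}, \qquad w_2 := \o^{\t},
$$
and observe that by the very definition of $\dual{LM}_{p_2\t,\o}(\rn,v_2)$,
\begin{align*}
\|\Id\|_{L_{p_1}(\rn,v_1) \rw \dual{LM}_{p_2 \t, \o}(\rn,v_2)}
&= \sup_{f \not\sim 0} \frac{\big\| \o(r)\, \|f\|_{p_2,v_2,\Brd} \big\|_{\t,\I}}{\|f\|_{p_1,v_1,\rn}} \\
&= \bigg( \sup_{g \not\sim 0} \frac{\|H^*(|g|)\|_{q_2,w_2,\I}}{\|g\|_{q_1,w_1,\rn}} \bigg)^{1/p_2},
\end{align*}
where the substitution $g = |f|^{p_2}$ is used, together with $\int_{\Brd}|f|^{p_2}v_2 = \int_{\Brd} g\, w_2^{\,0}\cdots$ — more precisely $\|f\|_{p_2,v_2,\Brd}^{p_2} = \int_{\Brd} g\, v_2 = (H^* (g v_2))(r)$, so one takes $w$ in Theorem~\ref{lem.2.800} to be the function whose role is played by $w_1$ after absorbing $v_2$. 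The point is that the map $f \mapsto |f|^{p_2}$ is a bijection of $\mp^+(\rn)$ onto itself that turns the $p_2$-th power of the $\dual{LM}$-quasinorm into an $H^*$-type expression and the $p_1$-power of the weighted Lebesgue quasinorm into $\|g\|_{q_1,w_1,\rn}^{p_1/p_2}$. Having written the ratio in this form, each of the eight cases is obtained by reading off the matching case (a)--(h) of Theorem~\ref{lem.2.800}: the dichotomy $p_2<p_1$ versus $p_1=p_2$ corresponds to $q_1>1$ versus $q_1=1$; the dichotomy $p_1\le\t$, $\t<p_1$, $\t=\i$ corresponds to $q_1\le q_2$, $q_2<q_1$, $q_2=\i$; the boundary case $p_1=\i$ gives $q_1=\i$; and the conjugate exponent $\tfrac{p_1 p_2}{p_1-p_2}$ appearing in the statement is exactly $p_2\cdot q_1' $, so $\|w\|_{q_1',\Btd}$ in Theorem~\ref{lem.2.800} becomes $\|v_1^{-1/p_1}v_2^{1/p_2}\|_{\frac{p_1p_2}{p_1-p_2},\Btd}$ after raising to the power $1/p_2$. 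In case (ii)/(vii), where $0<q_2<q_1$, the index $r$ of Theorem~\ref{lem.2.800}(b) given by $1/r=1/q_2-1/q_1$ satisfies $p_2 r = \tfrac{\t p_1}{p_1-\t}$, which produces the outer norm $\|\cdot\|_{\frac{\t p_1}{p_1-\t},\o^{\t},\I}$; one also rewrites $d(\|u\|_{q,(t,\i)}^{\,\cdot})$-type integrals against $\o^\t\,dt$ using $\|w_2\|_{q_2,(t,\i)}=\|\o\|_{\t,(t,\i)}^{\t/p_2}$ wait — here with $H^*$ it is $\|\o\|_{\t,(0,t)}$, and Remark~\ref{R:5.5}-type manipulations are not needed since $\t<\i$ forces $q_2<\i$ and the Lebesgue--Stieltjes measure $d(\int_0^t \o^\t)$ is just $\o^\t\,dt$.

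For case (ix), where $p_1=p_2=\i$, the operator $H^*$ is not the right model; instead the inner quasinorm is $\esup_{\Brd}|f| v_2$, so with $w := v_1/v_2$ one has
\begin{align*}
\|\Id\|_{L_{\i}(\rn,v_1) \rw \dual{LM}_{\i \t, \o}(\rn,v_2)}
&= \sup_{f \not\sim 0} \frac{\big\| \o(r)\, \|f\|_{\i,v_2,\Brd} \big\|_{\t,\I}}{\|f\|_{\i,v_1,\rn}}
= \sup_{g \not\sim 0} \frac{\|(S^*(|g|))\,\o\|_{\t,\I}}{\|g w\|_{\i,\rn}},
\end{align*}
and the result is immediate from Theorem~\ref{lem.2.800000000} with $v$ there equal to $\o$ and $w$ as above, giving the stated equivalence to $\|\o(t)\,\|v_1^{-1}v_2\|_{\i,\Btd}\|_{\t,\I}$.

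The main obstacle is bookkeeping rather than mathematics: one must verify that the parameter translations line up in \emph{every} one of the nine cases, in particular that the ranges "$0<p_1,\t\le\i$, $0<p_2<\i$'' of the theorem are exactly covered by the disjoint list (i)--(ix), and that in each case the hypothesis needed to invoke the relevant part of Theorem~\ref{lem.2.800} or Theorem~\ref{lem.2.800000000} — namely $q_1\ge 1$, i.e. $p_1\ge p_2$, and $\o\in\Otd$ ensuring $\|w_2\|_{q_2,(0,t)}=\|\o\|_{\t,(0,t)}^{\t/p_2}<\i$ for all $t>0$ — is met. A secondary point requiring care is the passage between "$f\not\sim 0$ in the $\dual{LM}$ sense'' and "$g\not\sim 0$ in $L_{q_1}(\rn,w_1)$'': since $v_1,v_2\in\W(\rn)$ are positive a.e., the substitution $g=|f|^{p_2}$ is a genuine bijection of equivalence classes, so the two suprema coincide and the exponent $1/p_2$ can be pulled out of the supremum without loss. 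Once these checks are in place, the nine formulas of the statement are just the nine formulas (a)--(h) of Theorem~\ref{lem.2.800} (plus Theorem~\ref{lem.2.800000000} for (ix)) transcribed through the dictionary above, with every occurrence of $(t,\i)$ replaced by $(0,t)$ and every ball $\Bt$ replaced by $\Btd$.
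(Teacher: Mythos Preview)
Your approach is correct and is precisely the one the paper intends: the paper does not write out a separate proof of Theorem~\ref{main100} but leaves it as the evident complementary analogue of the proof of Theorem~\ref{main1}, namely the same substitution $q_1=p_1/p_2$, $q_2=\t/p_2$, $w_1=v_1 v_2^{-q_1}$, $w_2=\o^{\t}$ and $g=|f|^{p_2}$, now invoking Theorem~\ref{lem.2.800} (the $H^*$-inequality) in place of Theorem~\ref{lem.2.8}, and Theorem~\ref{lem.2.800000000} in place of Theorem~\ref{lem.2.8000000} for case~(ix). Your bookkeeping remarks about $q_1\ge 1$, the bijection $f\mapsto |f|^{p_2}$, and the identification $p_2 r=\t p_1/(p_1-\t)$ are all in order; just clean up the stream-of-consciousness passages (the ``wait'' aside and the unfinished $w_2^{\,0}\cdots$ line) before submitting.
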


Theorem \ref{reduction.lemma} reduces the problem of boundedness of
$M$ from $L_{p_1}(\rn,v_1)$ to $LM_{p\t_2,\o_2}(\rn,v_2)$ and from
$L_{p_1}(\rn,v_1)$ to $\dual{LM}_{p\t_2,\o_2}(\rn,v_2)$ to the
characterizations of \eqref{emb1} and \eqref{emb2}, respectively,
when we know the boundedness of $M$ on $L_{p_1}(\rn,v_1)$. The
latter happens exactly when $v_1 \in A_{p_1}$, $1 < p_1 < \i$.

Let $w$ be a weight function and $1< p< \i$. We say that $w\in A_p$
if there exists a constant $c_p>0$ such that, for every ball
$B\subset \rn$,
$$
\left(\int_B w(x)dx\right)\left(\int_B
w(x)^{1-p'}dx\right)^{p-1}\leq c_p|B|^p.
$$
It is well known that the Muckenhoupt classes characterize the
boundedness of  $M$ on weighted Lebesgue spaces. Namely, $M$ is
bounded on $L_{p}(\rn,w)$ if and only if $w\in A_p$, $1<p<\i$ (see,
for instance, \cite{Mucken}).

The following statements are consequences of combination of Theorems
\ref{main1} and \ref{main100} with Theorem \ref{reduction.lemma}.
\begin{cor}\label{Corollary.1}
Let $1 < p_1 < \i$, $0 < p_2 < \i$, $0 < \t \le \i$, $v_2 \in
\W(\rn)$ and $\o \in \O_{\t}$. Moreover, assume that $v_1 \in
A_{p_1}$.

{\rm (i)} If $p_2 < p_1 \le \t < \i$, then
\begin{equation*}
\|M\|_{L_{p_1}(\rn,v_1) \rw LM_{p_2 \,\t, \o}(\rn,v_2)} \ap
\sup_{t
> 0} \|\o\|_{\t,(t,\i)}
\left\|v_1^{-1/{p_1}}v_2^{1/{p_2}}\right\|_{\frac{p_1 p_2}{p_1 -
p_2},\Bt}
\end{equation*}
uniformly in $\o \in \O_{\t}$.

{\rm (ii)} If $p_2 < p_1 < \i$, $0 < \t < p_1$, then
\begin{equation*}
\|M\|_{L_{p_1}(\rn,v_1) \rw LM_{p_2 \,\t, \o}(\rn,v_2)} \ap
\left\|\|\o\|_{\t,(t,\i)}^{{\t}/{p_1}}\left\|v_1^{-1/{p_1}}v_2^{1/{p_2}}\right\|_{\frac{p_1
p_2}{p_1 - p_2},\Bt}\right\|_{\frac{\t p_1}{p_1 - \t},\o^{\t},\I}
\end{equation*}
uniformly in $\o \in \O_{\t}$.

{\rm (iii)} If $p_2 < p_1 < \i$, $\t = \i$, then
\begin{equation*}
\|M\|_{L_{p_1}(\rn,v_1) \rw LM_{p_2 \,\t, \o}(\rn,v_2)} \ap
\sup_{t
> 0} \|\o\|_{\t,(t,\i)}
\left\|v_1^{-1/{p_1}}v_2^{1/{p_2}}\right\|_{\frac{p_1 p_2}{p_1 -
p_2},\Bt}
\end{equation*}
uniformly in $\o \in \O_{\t}$.

%{\rm (d)} If $p_1 = \t = \i$, $0 < p_2 < \i$, then
%\begin{equation*}
%\|M\|_{L_{p_1}(\rn,v_1) \rw LM_{p_2 \t, \o}(\rn,v_2)} \ap \sup_{t
%> 0} \|\o\|_{\t,(t,\i)}
%\left\|v_1^{-1}v_2^{1/{p_2}}\right\|_{p_2,\Bt}
%\end{equation*}
%uniformly in $\o \in \O_{\t}$.
%
%{\rm (e)} If $p_1 = \i$, $0 < p_2 < \i$, $0 < \t < \i$, then
%\begin{equation*}
%\|M\|_{L_{p_1}(\rn,v_1) \rw LM_{p_2 \t, \o}(\rn,v_2)} \ap \left\|
%\o(t) \left\| v_1^{-1}v_2^{1/{p_2}}  \right\|_{p_2,\Bt}
%\right\|_{\t,\I}
%\end{equation*}
%uniformly in $\o \in \O_{\t}$.

{\rm (iv)} If $p : = p_1 = p_2 \le \t < \i$, then
\begin{equation*}
\|M\|_{L_{p_1}(\rn,v_1) \rw LM_{p_2 \,\t, \o}(\rn,v_2)} \ap
\sup_{t > 0} \|\o\|_{\t,(t,\i)} \| v_1^{-1 / p} v_2^{ 1/ p}
\|_{\i,\Bt}
\end{equation*}
uniformly in $\o \in \O_{\t}$.

{\rm (v)} If $ p : = p_1 = p_2$,  $0 < \t < p$, then
\begin{equation*}
\|M\|_{L_{p_1}(\rn,v_1) \rw LM_{p_2 \,\t, \o}(\rn,v_2)} \ap
\left\|\|\o\|_{\t,(t,\i)}^{\t/{p}}\left\|v_1^{-1/{p}}v_2^{1/{p}}\right\|_{\i,\Bt}\right\|_{\frac{\t
p}{p - \t},\o^{\t},\I}
\end{equation*}
uniformly in $\o \in \O_{\t}$.

{\rm (vi)} If $p : = p_1 = p_2$, $\t = \i$, then
\begin{equation*}
\|M\|_{L_{p_1}(\rn,v_1) \rw LM_{p_2 \,\t, \o}(\rn,v_2)} \ap
\sup_{t > 0} \|\o\|_{\t,(t,\i)} \| v_1^{- 1 /  p} v_2^{ 1 / p}
\|_{\i,\Bt}
\end{equation*}
uniformly in $\o \in \O_{\t}$.
\end{cor}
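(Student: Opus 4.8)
The plan is to derive the corollary by combining Theorem~\ref{main1} with the ``reduction'' principle of Theorem~\ref{reduction.lemma}, applied with $X = L_{p_1}(\rn,v_1)$ and $Y = LM_{p_2 \,\t, \o}(\rn,v_2)$. Both are quasi-normed vector spaces (for $Y$ this uses $\o \in \O_{\t}$), so Theorem~\ref{reduction.lemma} is applicable once its two hypotheses are verified.

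First I would check that $M$ is bounded on $X$: since $1 < p_1 < \i$ and $v_1 \in A_{p_1}$, this is exactly the Muckenhoupt characterization of the boundedness of $M$ on weighted Lebesgue spaces recalled just above the corollary. Next I would check that $Y$ has the monotonicity property: if $0 \le g \le f$ a.e. on $\rn$, then $\|g\|_{p_2,v_2,\Br} \le \|f\|_{p_2,v_2,\Br}$ for every $r > 0$ straight from the definition of the functional $\|\cdot\|_{p_2,v_2,\Br}$, and since $r \mapsto \o(r)\|g\|_{p_2,v_2,\Br}$ is then pointwise dominated by $r \mapsto \o(r)\|f\|_{p_2,v_2,\Br}$, applying $\|\cdot\|_{\t,\I}$ gives $\|g\|_Y \le \|f\|_Y$. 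With both hypotheses in hand, Theorem~\ref{reduction.lemma} yields
$$
\|M\|_{L_{p_1}(\rn,v_1) \rw LM_{p_2 \,\t, \o}(\rn,v_2)} \ap \|\Id\|_{L_{p_1}(\rn,v_1) \rw LM_{p_2 \,\t, \o}(\rn,v_2)}.
$$

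It then remains to substitute the value of the embedding norm. The hypotheses of cases (i), (ii), (iii) of the corollary are precisely those of cases (i), (ii), (iii) of Theorem~\ref{main1}, and the hypotheses of cases (iv), (v), (vi) of the corollary are precisely those of cases (vi), (vii), (viii) of Theorem~\ref{main1} --- the standing assumption $1 < p_1 < \i$ here rules out the cases with $p_1 = \i$ that appear in Theorem~\ref{main1}. Reading off $\|\Id\|_{L_{p_1}(\rn,v_1) \rw LM_{p_2 \,\t, \o}(\rn,v_2)}$ from the corresponding case of Theorem~\ref{main1} and inserting it above gives each of the six asserted equivalences.

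I do not anticipate any genuine difficulty; the only point worth a remark is the uniformity in $\o$. The implicit constant produced by Theorem~\ref{reduction.lemma} is governed by $\|M\|_{X \rw X}$, which depends only on $p_1$ and the $A_{p_1}$-constant of $v_1$, and in no way on $\o$. Hence the equivalence is uniform in $\o \in \O_{\t}$, matching the uniformity already built into Theorem~\ref{main1}.
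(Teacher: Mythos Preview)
Your proposal is correct and follows exactly the approach the paper indicates: it explicitly states that the corollary is a consequence of combining Theorem~\ref{main1} with Theorem~\ref{reduction.lemma}, and you have simply supplied the routine verifications (boundedness of $M$ on $X$ via the $A_{p_1}$ assumption, the monotonicity of $Y$, and the case matching) that the paper leaves implicit. Your remark on uniformity in $\o$ is a nice touch and is consistent with how the paper tracks constants.
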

%Theorem \ref{thm.2.7.} and Corollary \ref{Corollary.1} immediately
%yield the following statement.
%\begin{cor}\label{Corollary.1000000}
%Let $1 < p_1 < \i$, $0 < p_2 \le p_1$, $v_2 \in \W(\rn)$ and $\o
%\in \O_{p_2}$. Moreover, assume that $v_1 \in A_{p_1}$. Denote by
%$$
%u(x) = v_2(x) \|\o\|_{p_2,(|x|,\i)}^{p_2}.
%$$
%
%{\rm (i)} If $p_2 < p_1$, then
%\begin{equation*}
%\|M\|_{L_{p_1}(\rn,v_1) \rw L_{p_2}(\rn,u)} \ap
%\left\|\|\o\|_{p_2,(t,\i)}^{{p_2}/{p_1}}\left\|v_1^{-1/{p_1}}v_2^{1/{p_2}}\right\|_{\frac{p_1
%p_2}{p_1 - p_2},\Bt}\right\|_{\frac{p_1 p_2}{p_1 -
%p_2},\o^{p_2},\I}
%\end{equation*}
%uniformly in $\o \in \O_{\t}$.
%
%{\rm (ii)} If $p = p_2 = p_1$, then
%\begin{equation*}
%\|M\|_{L_{p}(\rn,v_1) \rw L_{p}(\rn,u)} \ap \sup_{t > 0}
%\|\o\|_{p,(t,\i)} \| v_1^{-1 / p} v_2^{ 1/ p} \|_{\i,\Bt}
%\end{equation*}
%uniformly in $\o \in \O_{\t}$.
%\end{cor}
\begin{cor}\label{Corollary.3.6}
Let $1 < p_1 < \i$, $0 < p_2 < \i$, $0 < \t \le \i$, $v_2 \in
\W(\rn)$ and $\o \in \Otd$. Moreover, assume that $v_1 \in
A_{p_1}$.

{\rm (i)} If $p_2 < p_1 \le \t < \i$, then
\begin{equation*}
\|M\|_{L_{p_1}(\rn,v_1) \rw \dual{LM}_{p_2 \,\t, \o}(\rn,v_2)} \ap
\sup_{t > 0} \|\o\|_{\t,(0,t)}
\left\|v_1^{-1/{p_1}}v_2^{1/{p_2}}\right\|_{\frac{p_1 p_2}{p_1 -
p_2},\Btd}
\end{equation*}
uniformly in $\o \in \Otd$.

{\rm (ii)} If $p_2 < p_1 < \i$, $0 < \t < p_1$, then
\begin{equation*}
\|M\|_{L_{p_1}(\rn,v_1) \rw \dual{LM}_{p_2 \,\t, \o}(\rn,v_2)} \ap
\left\|\|\o\|_{\t,(0,t)}^{{\t} /
{p_1}}\left\|v_1^{-1/{p_1}}v_2^{1/{p_2}}\right\|_{\frac{p_1
p_2}{p_1 - p_2},\Btd}\right\|_{\frac{\t p_1}{p_1 - \t},\o^{\t},\I}
\end{equation*}
uniformly in $\o \in \Otd$.

{\rm (iii)} If $p_2 < p_1 < \i$, $\t = \i$, then
\begin{equation*}
\|M\|_{L_{p_1}(\rn,v_1) \rw \dual{LM}_{p_2 \,\t, \o}(\rn,v_2)} \ap
\sup_{t > 0} \|\o\|_{\t,(0,t)}
\left\|v_1^{-1/{p_1}}v_2^{1/{p_2}}\right\|_{\frac{p_1 p_2}{p_1 -
p_2},\Btd}
\end{equation*}
uniformly in $\o \in \Otd$.

%{\rm (d)} If $p_1 = \t = \i$, $0 < p_2 < \i$, then
%\begin{equation*}
%\|\Id\|_{L_{p_1}(\rn,v_1) \rw \dual{LM}_{p_2 \t, \o}(\rn,v_2)} \ap
%\sup_{t > 0} \|\o\|_{\t,(0,t)}
%\left\|v_1^{-1}v_2^{1/{p_2}}\right\|_{p_2,\Btd}
%\end{equation*}
%uniformly in $\o \in \Otd$.
%
%{\rm (e)} If $p_1 = \i$, $0 < p_2 < \i$, $0 < \t < \i$, then
%\begin{equation*}
%\|\Id\|_{L_{p_1}(\rn,v_1) \rw \dual{LM}_{p_2 \t, \o}(\rn,v_2)} \ap
%\left\| \o(t) \left\| v_1^{-1}v_2^{1/{p_2}}  \right\|_{p_2,\Btd}
%\right\|_{\t,\I}
%\end{equation*}
%uniformly in $\o \in \Otd$.

{\rm (iv)} If $p : = p_1 = p_2 \le \t < \i$, then
\begin{equation*}
\|M\|_{L_{p_1}(\rn,v_1) \rw \dual{LM}_{p_2 \,\t, \o}(\rn,v_2)} \ap
\sup_{t > 0} \|\o\|_{\t,(0,t)} \bigg\| v_1^{-1 / p} v_2^{ 1/ p}
\bigg\|_{\i,\Btd}
\end{equation*}
uniformly in $\o \in \Otd$.

{\rm (v)} If $p : = p_1 = p_2$,  $0 < \t < p$, then
\begin{equation*}
\|M\|_{L_{p_1}(\rn,v_1) \rw \dual{LM}_{p_2 \,\t, \o}(\rn,v_2)} \ap
\left\|\|\o\|_{\t,(0,t)}^{\t/{p}}\left\|v_1^{-1/{p}}v_2^{1/{p}}\right\|_{\i,\Btd}\right\|_{\frac{\t
p}{p - \t},\o^{\t},\I}
\end{equation*}
uniformly in $\o \in \Otd$.

{\rm (vi)} If $p : = p_1 = p_2$, $\t = \i$, then
\begin{equation*}
\|M\|_{L_{p_1}(\rn,v_1) \rw \dual{LM}_{p_2 \,\t, \o}(\rn,v_2)} \ap
\bigg\| \|\o\|_{\t,(0,t)} \bigg\| v_1^{- 1 / p} v_2^{ 1 / p}
\bigg\|_{\i,\Btd} \bigg\|_{\i,\I}
\end{equation*}
uniformly in $\o \in \Otd$.
\end{cor}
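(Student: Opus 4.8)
The plan is to obtain Corollary~\ref{Corollary.3.6} by feeding Theorem~\ref{main100} into the reduction principle of Theorem~\ref{reduction.lemma}. Put $X := L_{p_1}(\rn,v_1)$ and $Y := \dual{LM}_{p_2\,\t,\o}(\rn,v_2)$, and check the two hypotheses of Theorem~\ref{reduction.lemma}. For the first, under the standing assumptions $1 < p_1 < \i$ and $v_1 \in A_{p_1}$ the maximal operator $M$ is bounded on $X = L_{p_1}(\rn,v_1)$; this is precisely the Muckenhoupt characterization recalled immediately before the statement, and its norm $\|M\|_{X \rw X}$ depends only on $p_1$ and $v_1$. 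For the second, $Y$ enjoys the monotonicity (lattice) property: if $0 \le g \le f$ on $\rn$, then $\|g\|_{p_2,v_2,\Brd} \le \|f\|_{p_2,v_2,\Brd}$ for every $r > 0$, and since the functional $h \mapsto \|\o(r)\,h(r)\|_{\t,\I}$ is monotone on $\mp^+\I$, we get $\|g\|_Y \le \|f\|_Y$ (with constant $1$).

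With both hypotheses verified, Theorem~\ref{reduction.lemma} yields
$$
\|M\|_{L_{p_1}(\rn,v_1) \rw \dual{LM}_{p_2\,\t,\o}(\rn,v_2)} \ap \|\Id\|_{L_{p_1}(\rn,v_1) \rw \dual{LM}_{p_2\,\t,\o}(\rn,v_2)}.
$$
Here the lower bound holds with constant $1$ and the upper bound with constant $\|M\|_{X\rw X}$; since neither of these depends on $\o$, the equivalence is uniform in $\o \in \Otd$. It then only remains to insert the values of $\|\Id\|_{L_{p_1}(\rn,v_1) \rw \dual{LM}_{p_2\,\t,\o}(\rn,v_2)}$ furnished by Theorem~\ref{main100}: cases (i)--(vi) of the corollary correspond exactly to cases (i), (ii), (iii), (vi), (vii), (viii) of Theorem~\ref{main100}, now under the additional restriction $1 < p_1 < \i$ imposed by $v_1 \in A_{p_1}$. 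In case (vi), where $\t = \i$, the outer functional $\|\cdot\|_{\frac{\t p}{p-\t},\o^{\t},\I}$ degenerates to $\sup_{t>0}$, which gives the stated form $\bigl\| \|\o\|_{\t,(0,t)}\,\| v_1^{-1/p} v_2^{1/p}\|_{\i,\Btd} \bigr\|_{\i,\I}$; all other cases are copied verbatim.

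There is no genuine obstacle here: the content is entirely in Theorems~\ref{main100} and \ref{reduction.lemma}, which we are entitled to use. The only point requiring (routine) attention is to confirm that the constants produced by the reduction step -- namely the lattice constant of $Y$ and the norm $\|M\|_{X\rw X}$ -- are independent of $\o$, so that the equivalences inherit the uniformity in $\o \in \Otd$ already present in Theorem~\ref{main100}. Once this is noted, combining the two theorems case by case completes the proof.
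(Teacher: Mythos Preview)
Your proposal is correct and follows exactly the approach the paper indicates: the paper states that Corollaries~\ref{Corollary.1} and~\ref{Corollary.3.6} are ``consequences of combination of Theorems~\ref{main1} and~\ref{main100} with Theorem~\ref{reduction.lemma}'', which is precisely what you carry out. The only minor quibble is your aside about case~(vi) ``degenerating'': case~(viii) of Theorem~\ref{main100} already states the supremum form directly, so no degeneration argument is needed there.
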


\section{Characterizations of $LM_{p_2 \t, \o}(\rn,v_2) \hra
L_{p_1}(\rn,v_1)$ and $\dual{LM}_{p_2 \t, \o}(\rn,v_2) \hra
L_{p_1}(\rn,v_1)$}\label{sec4}

In this section we characterize the embeddings \eqref{emb3} and
\eqref{emb4}.
\begin{thm}\label{main2}
Let $0 < p_1 \le p_2 < \i$, $0 < \t \le \i$, $v_1,\,v_2 \in
\W(\rn)$ and $\o \in \O_{\t}$.

{\rm (a)} If $0 < \t \le p_1 \le p_2 < \i$, then
\begin{equation*}
\|\Id\|_{LM_{p_2 \,\t, \o}(\rn,v_2) \rw L_{p_1}(\rn,v_1)} \ap
\sup_{t > 0} \|\o\|_{\t,(t,\i)}^{-1}
\left\|v_1^{1/{p_1}}v_2^{-1/{p_2}}\right\|_{\frac{p_1 p_2}{p_2 -
p_1},\Btd}
\end{equation*}
uniformly in $\o \in \O_{\t}$.

{\rm (b)} If $0 < p_1 \le p_2 < \i$, $p_1 < \t \le \i$, then
\begin{align*}
\|\Id\|_{LM_{p_2 \,\t, \o}(\rn,v_2) \rw L_{p_1}(\rn,v_1)} \ap &
\left( \int_{\I}
\left\|v_1^{1/{p_1}}v_2^{-1/{p_2}}\right\|_{\frac{p_1 p_2}{p_2 -
p_1},\Btd}^{\frac{p_1 \t}{\t - p_1}} d \left(
\|\o\|_{\t,(t-,\i)}^{-\frac{p_1 \t}{\t - p_1}} \right)
\right)^{\frac{\t - p_1}{p_1 \t}} \\
& +  \|\o\|_{\t,(0,\i)}^{-1}
\left\|v_1^{1/{p_1}}v_2^{-1/{p_2}}\right\|_{\frac{p_1 p_2}{p_2 -
p_1},\rn}
\end{align*}
uniformly in $\o \in \O_{\t}$.
\end{thm}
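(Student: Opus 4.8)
The plan is to derive both assertions from the reverse Hardy inequality of Theorem~\ref{T:5.1}, via the substitution already used in the proof of Theorem~\ref{main1}. Since the quasi-norms of $LM_{p_2\,\t,\o}(\rn,v_2)$ and $L_{p_1}(\rn,v_1)$ depend only on $|f|$, the supremum defining $\|\Id\|_{LM_{p_2\,\t,\o}(\rn,v_2)\rw L_{p_1}(\rn,v_1)}$ may be taken over $f\in\mp^+(\rn)$, and the map $f\mapsto g:=f^{p_2}v_2$ is a bijection of $\mp^+(\rn)$ onto itself taking $f\not\sim 0$ to $g\not\sim 0$ (recall $v_2\in\W(\rn)$). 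Put
$$s:=p_1/p_2,\qquad q:=\t/p_2,\qquad w:=v_1^{p_2/p_1}v_2^{-1},\qquad u:=\o^{p_2}.$$
Since $\int_{\Br}f^{p_2}v_2=(Hg)(r)$ we get $\|f\|_{p_2,v_2,\Br}=((Hg)(r))^{1/p_2}$ and hence $\|\o(r)\|f\|_{p_2,v_2,\Br}\|_{\t,\I}=\|(Hg)u\|_{q,\I}^{1/p_2}$, while $\int_{\rn}f^{p_1}v_1=\int_{\rn}(gw)^s$ gives $\|f\|_{p_1,v_1,\rn}=\|gw\|_{s,\rn}^{1/p_2}$. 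Therefore
$$\|\Id\|_{LM_{p_2\,\t,\o}(\rn,v_2)\rw L_{p_1}(\rn,v_1)}=\Big(\sup_{g\not\sim 0}\frac{\|gw\|_{s,\rn}}{\|(Hg)u\|_{q,\I}}\Big)^{1/p_2}=D(s,q)^{1/p_2},$$
and the whole problem reduces to applying Theorem~\ref{T:5.1} and translating the quantity $C(s,q)$ back.

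The hypotheses of Theorem~\ref{T:5.1} are met: since $\o\in\O_{\t}$, one has $\|u\|_{q,(t,\i)}=\|\o\|_{\t,(t,\i)}^{p_2}\in(0,\i)$ for all $t>0$. Moreover the admissibility ranges match the dichotomy of the statement, because $0<q\le s\le 1$ in Theorem~\ref{T:5.1}(a) is exactly $0<\t\le p_1\le p_2$, and $0<s\le 1$, $s<q\le\i$ in Theorem~\ref{T:5.1}(b) is exactly $p_1\le p_2$, $p_1<\t\le\i$. For $p_1<p_2$ we have $s'=p_1/(p_2-p_1)$, and a short exponent computation with $w=v_1^{p_2/p_1}v_2^{-1}$ yields, for every $t>0$,
$$\|w\|_{s',\Btd}=\|v_1^{1/p_1}v_2^{-1/p_2}\|_{\frac{p_1p_2}{p_2-p_1},\Btd}^{\,p_2},\qquad \|u\|_{q,(t,\i)}^{-1}=\|\o\|_{\t,(t,\i)}^{-p_2}.$$
Inserting these into the expression for $C(s,q)$ in Theorem~\ref{T:5.1}(a) and raising to the power $1/p_2$ (which preserves $\ap$, with constant depending only on $p_2$) gives precisely assertion (a). The boundary case $p_1=p_2$ is the same argument with $s=1$, $s'=\i$, the symbol $\frac{p_1p_2}{p_2-p_1}$ being read as $\i$ so that the displayed identity for $\|w\|_{s',\Btd}$ still holds.

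For assertion (b) it remains to unwind the more elaborate $C(s,q)$. With $1/\rho=1/s-1/q$ (this is the exponent $r$ of Theorem~\ref{T:5.1} with its $p,q$ replaced by our $s,q$) one computes $p_2\rho=\frac{p_1\t}{\t-p_1}$ and $\frac{1}{p_2\rho}=\frac{\t-p_1}{p_1\t}$, so that $\|w\|_{s',\Btd}^{\rho}=\|v_1^{1/p_1}v_2^{-1/p_2}\|_{\frac{p_1p_2}{p_2-p_1},\Btd}^{\,p_1\t/(\t-p_1)}$, and, using Remark~\ref{R:5.5} when $\t<\i$, $\|u\|_{q,(t-,\i)}^{-\rho}=\|\o\|_{\t,(t-,\i)}^{-p_1\t/(\t-p_1)}$; the endpoint term equals $\big(\|\o\|_{\t,\I}^{-1}\|v_1^{1/p_1}v_2^{-1/p_2}\|_{\frac{p_1p_2}{p_2-p_1},\rn}\big)^{p_2}$. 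Since $D(s,q)^{1/p_2}\ap C(s,q)^{1/p_2}$ and $(A+B)^{1/p_2}\ap A^{1/p_2}+B^{1/p_2}$ with constants depending only on $p_2$, raising $C(s,q)$ to the power $1/p_2$ and collecting the two terms produces exactly the right-hand side of (b).

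The argument is, in the end, a mechanical reduction; the steps that call for care rather than ingenuity are the exponent bookkeeping above --- notably the identities $\frac{p_1p_2}{p_2-p_1}\cdot\frac{1}{s'}=p_2$ and $p_2\rho=\frac{p_1\t}{\t-p_1}$ --- the one-sided limits $\|\o\|_{\t,(t-,\i)}$ dictated by Convention~\ref{conv:3.3} and Remark~\ref{R:5.5} (these matter only when $\t=\i$, since for $\t<\i$ they collapse to $\|\o\|_{\t,(t,\i)}$), and checking that every equivalence constant depends on $p_1,p_2,\t$ alone; the last point is inherited directly from the uniformity of $D(s,q)\ap C(s,q)$ in Theorem~\ref{T:5.1}, which is why ``$\ap$ uniformly in $\o\in\O_{\t}$'' holds as stated.
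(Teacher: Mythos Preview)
Your proof is correct and follows exactly the same route as the paper: reduce the embedding norm to the reverse Hardy quotient $D(s,q)$ via the substitution $g=f^{p_2}v_2$ and then invoke Theorem~\ref{T:5.1}. The paper's own proof consists of precisely this reduction in two lines, without spelling out the exponent bookkeeping that you have carefully verified.
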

\begin{proof}
Since
\begin{align*}
\|\Id\|_{LM_{p_2 \,\t, \o}(\rn,v_2) \rw L_{p_1}(\rn,v_1)} & =
\sup_{f \not \sim \,0} \frac{\|f\|_{p_1,v_1,\rn}}{\left\| w(r)
\|f\|_{p_2,v_2,\Br}
\right\|_{\t,(0,\infty)}} \\
& = \left(\sup_{g \not \sim \,0}
\frac{\|g\|_{q_1,w_1,\rn}}{\left\| H(|g|) \right\|_{q_2,w_2,\I}
}\right)^{1 / {p_2}},
\end{align*}
it remains to apply Theorem \ref{T:5.1}.
\end{proof}

\begin{thm}\label{main200}
Let $0 < p_1 \le p_2 < \i$, $0 < \t \le \i$, $v_1,\,v_2 \in
\W(\rn)$ and $\o \in \Otd$.

{\rm (a)} If $0 < \t \le p_1 \le p_2 < \i$, then
\begin{equation*}
\|\Id\|_{\dual{LM}_{p_2 \,\t, \o}(\rn,v_2) \rw L_{p_1}(\rn,v_1)}
\ap \sup_{t > 0} \|\o\|_{\t,(0,t)}^{-1}
\left\|v_1^{1/{p_1}}v_2^{-1/{p_2}}\right\|_{\frac{p_1 p_2}{p_2 -
p_1},\Bt}
\end{equation*}
uniformly in $\o \in \Otd$.

{\rm (b)} If $0 < p_1 \le p_2 < \i$, $p_1 < \t \le \i$, then
\begin{align*}
\|\Id\|_{\dual{LM}_{p_2 \,\t, \o}(\rn,v_2) \rw L_{p_1}(\rn,v_1)}
\ap & \left( \int_{\I}
\left\|v_1^{1/{p_1}}v_2^{-1/{p_2}}\right\|_{\frac{p_1 p_2}{p_2 -
p_1},\Bt}^{\frac{p_1 \t}{\t - p_1}} d \left( -
\|\o\|_{\t,(0,t+)}^{-\frac{p_1 \t}{\t - p_1}} \right)
\right)^{\frac{\t - p_1}{p_1 \t}} \\
& +  \|\o\|_{\t,(0,\i)}^{-1}
\left\|v_1^{1/{p_1}}v_2^{-1/{p_2}}\right\|_{\frac{p_1 p_2}{p_2 -
p_1},\rn}
\end{align*}
uniformly in $\o \in \Otd$.
\end{thm}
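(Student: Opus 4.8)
\textbf{Proof proposal for Theorem \ref{main200}.}
The plan is to mirror exactly the argument used for Theorem \ref{main2}, replacing the operator $H$ by its complementary counterpart $H^*$ and the space $LM_{p_2\t,\o}(\rn,v_2)$ by $\dual{LM}_{p_2\t,\o}(\rn,v_2)$. First I would unravel the definition of the embedding norm: by Definition \ref{defi.2.2},
\begin{align*}
\|\Id\|_{\dual{LM}_{p_2\,\t,\o}(\rn,v_2)\rw L_{p_1}(\rn,v_1)}
&= \sup_{f\not\sim 0}\frac{\|f\|_{p_1,v_1,\rn}}{\left\|\o(r)\|f\|_{p_2,v_2,\Brd}\right\|_{\t,\I}}.
\end{align*}
The idea is to substitute $g:=|f|^{p_2}$, so that $\|f\|_{p_2,v_2,\Brd}=\|g\|_{1,v_2,\Brd}^{1/p_2}$ and, after setting $q_1:=p_1/p_2$, $q_2:=\t/p_2$, $w_1:=v_1 v_2^{-q_1}$, $w_2:=\o^{\t}$, one checks that $\|g\|_{q_1,w_1,\rn}=\|f\|_{p_1,v_1,\rn}^{p_2}$ and $\|\o(r)\|f\|_{p_2,v_2,\Brd}\|_{\t,\I}=\|(H^*g)\|_{q_2,w_2,\I}^{1/p_2}$ — exactly as in the proof of Theorem \ref{main2}, but with $\Br$ replaced by $\Brd$ and hence $H$ replaced by $H^*$. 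This reduces the embedding norm to $\bigl(\sup_{g\not\sim 0}\|g\|_{q_1,w_1,\rn}/\|(H^*g)u\|_{q_2,w_2,\I}\bigr)^{1/p_2}$ with $u=\o^{\t/(\cdot)}$ absorbed into $w_2$, i.e. to the best constant $D^*(q_1,q_2)$ in the reverse Hardy inequality \eqref{5.100} of Theorem \ref{T:5.100}.

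Next I would apply Theorem \ref{T:5.100} with $p\leftarrow q_1=p_1/p_2$, $q\leftarrow q_2=\t/p_2$ and weights $w\leftarrow w_1^{1/q_1}$, $u\leftarrow w_2^{1/q_2}=\o$. The hypothesis $0<p_1\le p_2$ guarantees $q_1\le 1$, so Theorem \ref{T:5.100} applies; its case (a) covers $q_2\le q_1$, which after multiplying through by $p_2$ becomes $\t\le p_1$, matching part (a) of the statement; its case (b) covers $q_1<q_2$, i.e. $p_1<\t$, matching part (b). In case (a) the quantity $C^*(q_1,q_2)$ from \eqref{5.200} is $\sup_t \|w_1^{1/q_1}\|_{q_1',\Bt}\|\o\|_{\t,(0,t)}^{-1}$; one computes $q_1'=\bigl(p_1/p_2\bigr)'=\frac{p_1}{p_1-p_2}$ (negative, consistent with Convention \ref{Notat.and.prelim.conv.1.1}(ii) for $0<q_1<1$) so that $\|w_1^{1/q_1}\|_{q_1',\Bt}=\|v_1^{1/p_2}v_2^{-1/p_2\cdot p_1/p_2}\|_{\ldots}$ raised to the power $1/p_2$ gives precisely $\bigl\|v_1^{1/p_1}v_2^{-1/p_2}\bigr\|_{\frac{p_1p_2}{p_2-p_1},\Bt}$ — here the algebra of exponents $\frac{1}{q_1 q_1'}$-type identities is the only slightly fiddly point, but it is entirely routine. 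Taking the $1/p_2$ power of $C^*(q_1,q_2)$ yields part (a).

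Finally, in case (b) Theorem \ref{T:5.100} gives $C^*(q_1,q_2)$ as the sum of a Lebesgue–Stieltjes integral term and an endpoint term; with $1/r=1/q_1-1/q_2$, multiplying the exponent $r$ by $p_2$ produces $\frac{p_1\t}{\t-p_1}$, the Stieltjes measure $d(-\|u\|_{q_2,(0,t+)}^{-r})$ becomes $d(-\|\o\|_{\t,(0,t+)}^{-p_1\t/(\t-p_1)})$ after raising to the appropriate power, and the endpoint term $\|w\|_{q_1',\rn}/\|u\|_{q_2,\I}$ becomes $\|\o\|_{\t,(0,\i)}^{-1}\|v_1^{1/p_1}v_2^{-1/p_2}\|_{\frac{p_1p_2}{p_2-p_1},\rn}$ — exactly the two terms displayed in part (b). I do not expect any genuine obstacle: the content is wholly contained in Theorem \ref{T:5.100}, and the work is the bookkeeping of the substitution $g=|f|^{p_2}$ and the exponent arithmetic, identical in structure to the proof of Theorem \ref{main2} with $H,\Br,(t,\i)$ systematically replaced by $H^*,\Brd,(0,t)$. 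If anything requires a word of care it is checking that $\o\in\Otd$ is precisely the condition ensuring $\|\o\|_{\t,(0,t)}\in(0,\i)$ for all $t$, so that the hypothesis ``$\|u\|_{q,(0,t)}<\infty$ for all $t$'' of Theorem \ref{T:5.100} is met and the spaces involved are nontrivial quasi-normed spaces.
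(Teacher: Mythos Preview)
Your proposal is correct and follows exactly the paper's own argument: reduce the embedding norm to the reverse multidimensional Hardy inequality for $H^*$ via the substitution $g=|f|^{p_2}v_2$ (with $q_1=p_1/p_2$, $q_2=\t/p_2$, $w_1=v_1v_2^{-q_1}$, $w_2=\o^{\t}$), and then invoke Theorem~\ref{T:5.100} in its two cases. The paper's proof is just the terse two-line version of what you wrote; the only cosmetic slip is that the weight $v_2$ should be absorbed into $g$ (not left outside) so that $H^*g$ literally matches Definition~\ref{lem.2.800}, but your exponent bookkeeping is otherwise the intended one.
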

\begin{proof}
It suffices to note that
\begin{align*}
\|\Id\|_{\dual{LM}_{p_2 \,\t, \o}(\rn,v_2) \rw L_{p_1}(\rn,v_1)} &
= \sup_{f \not \sim \,0} \frac{\|f\|_{p_1,v_1,\rn}}{\left\| w(r)
\|f\|_{p_2,v_2,\Brd}
\right\|_{\t,(0,\infty)}} \\
& = \left(\sup_{g \not \sim \,0}
\frac{\|g\|_{q_1,w_1,\rn}}{\left\| H^*(|g|) \right\|_{q_2,w_2,\I}
}\right)^{1 / {p_2}}.
\end{align*}
and apply Theorem \ref{T:5.100}.
\end{proof}

\begin{defi}
Let $X$ be a set of functions from $\M(\rn)$, endowed with a
positively homogeneous functional $\|\cdot \|_X$, defined for
every $f\in \M(\rn)$ and such that $f\in X$ if and only if
$\|f\|_X<\i$. We define the associate space $X'$ of $X$ as the set
of all functions $f\in \M(\rn)$ such that $\|f\|_{X'}<\i$, where
$$
\|f\|_{X'}=\sup \left\{\int_{\rn}|f(x)g(x)|\,dx : \,\,\|g\|_X \leq
1\right\}.
$$
\end{defi}

In \cite{GogMus2} the associate spaces of local Morrey-type and
complementary local Morrey-type spaces were calculated. In
particular, Theorems \ref{main2} and \ref{main200} allows us to
give a characterization of the associate spaces of weighted local
Morrey-type and complementary local Morrey-type spaces.
\begin{thm}\label{thm.6.6}
Assume $1\leq p<\i$, $0<\theta \leq \i$. Let  $\o\in\Ot$ and $v
\in \W(\rn)$. Set
$$
X=\LMv.
$$

\rm{(i)} Let $0<\t\leq 1$. Then
$$
\|f\|_{X'}\ap \sup _{t\in (0,\i)}\|f \|_{p',v^{1 - {p'}},\Btd}
\Vert \o\Vert _{L_{\t}(t,\i)}^{-1},
$$
with the positive constants in equivalence independent of $f$.

\rm{(ii)} Let $1<\t\leq\i$. Then
$$
\|f\|_{X'}\ap \left(\int_{\I} \|f \|_{{p'},v^{1-{p'}},\Btd}^{\t'}d
\left( \|
\o\|_{{\t},(t-,\i)}^{-\t'}\right)\right)^{{1}/{\t'}}+\frac{\|f\|_{{p'},v^{1-{p'}},\rn}}{\|\o\|_{{\t},\I}},
$$
with the positive constants in equivalence independent of $f$.
\end{thm}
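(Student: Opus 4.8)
The plan is to reduce the computation of $\|f\|_{X'}$ for $X=\LMv$ directly to Theorem~\ref{main2}. Indeed, unravelling the definition of the associate norm,
\begin{align*}
\|f\|_{X'} &= \sup\left\{\int_{\rn}|f(x)g(x)|\,dx:\ \left\|\o(r)\|g\|_{p,v,\Br}\right\|_{\t,\I}\le 1\right\}\\
&= \sup_{g\not\sim 0}\frac{\int_{\rn}|f(x)g(x)|\,dx}{\left\|\o(r)\|g\|_{p,v,\Br}\right\|_{\t,\I}}.
\end{align*}
The inner supremum over $g$ should be computed in two stages: first, for a fixed ``profile'' $r\mapsto\|g\|_{p,v,\Br}$, the quantity $\int_{\rn}|fg|$ is maximised by choosing $|g|$ to saturate H\"older's inequality on each annulus, which by duality of $L_p(v)$ with $L_{p'}(v^{1-p'})$ (valid since $1\le p<\i$) turns $\int_{\rn}|fg|$ into essentially $\left\|\,\|f\|_{p',v^{1-p'},\Brd}\,d(\text{something monotone in }r)\right\|$; second, one is left with a one-dimensional weighted problem. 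The cleanest route is to observe that $\|f\|_{X'}$ is precisely of the form $\|\Id\|_{LM_{pp,\o}(\rn,v)\to L_1(\rn,|f|\,dx)}$ up to the usual normalisation, i.e. an embedding of a (local Morrey) space into a weighted $L_1$-space; this is exactly the situation treated by Theorem~\ref{main2} with $p_2=p$, $\t=\t$, $p_1=1$, $v_2=v$ and $v_1$ chosen so that $v_1^{1/p_1}v_2^{-1/p_2}$ matches $|f|$ appropriately. More transparently, one substitutes $g=h\,v^{-1}$ and passes to the variable $G:=|h|^{p}$, reducing to the reverse Hardy inequality
$$
\|f\cdot(\text{power of }G)\|\lesssim C\,\bigl\|(HG)\,\o^{\t}\bigr\|,
$$
which is governed by Theorem~\ref{T:5.1} with $p$ there equal to $1$ (note $p'=\i$ in that theorem corresponds to the essential-supremum reading, but here the relevant exponent is $p'$ of the original $p$, entering through the dual pairing before the reduction).

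Concretely I would proceed as follows. First fix the monotone function $t\mapsto \rho(t):=\|g\|_{p,v,\Bt}$ and, given $\rho$, maximise $\int_{\rn}|fg|$ subject to $\|g\|_{p,v,\Bt}\le\rho(t)$ for all $t$. Writing $\int_{\rn}|fg| = \int_{0}^{\i}\bigl(\tfrac{d}{dt}\int_{\Bt}|fg|\bigr)dt$ and applying H\"older on the sphere layers, the extremal $g$ concentrates mass where $\|f\|_{p',v^{1-p'},\Brd}$ is largest relative to the constraint; carrying this out yields
$$
\sup_{g:\,\|g\|_{p,v,\Br}\le\rho(r)}\int_{\rn}|fg| \ \ap\ \int_{\I}\|f\|_{p',v^{1-p'},\Brd}\,d\rho(r)
$$
(with the boundary term $\rho(\i)\|f\|_{p',v^{1-p'},\rn}$ when $\rho(\i)<\i$). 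Second, substitute this back: $\|f\|_{X'}\ap \sup_{\rho\uparrow}\ \Bigl(\int_{\I}\|f\|_{p',v^{1-p'},\Brd}\,d\rho\Bigr)\big/\|\o\rho\|_{\t,\I}$, which is a one-dimensional reverse Hardy problem with the roles of $p,q$ there being $1$ and $\t$. Third, invoke Theorem~\ref{T:5.1}: case (a) ($0<\t\le 1$, i.e. $q\le p\le 1$ with $p=1$, $q=\t$) gives the supremum expression $\sup_{t}\|f\|_{p',v^{1-p'},\Btd}\|\o\|_{\t,(t,\i)}^{-1}$ in part~(i); case (b) ($0<p\le1<q$, i.e. $p=1<\t$, with $1/r = 1 - 1/\t = 1/\t'$) gives the integral-plus-boundary expression in part~(ii), after using Remark~\ref{R:5.5} to replace $\|\o\|_{\t,(t-,\i)}$ by $\|\o\|_{\t,(t,\i)}$ when $\t<\i$. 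Finally, for the endpoint $\t=\i$ one checks the formula in~(ii) reads correctly as a plain integral against $d(\|\o\|_{\i,(t-,\i)}^{-1})$; this is again covered by Theorem~\ref{T:5.1}(b).

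The main obstacle I anticipate is the first step: rigorously justifying the ``layer-cake'' duality identity
$$
\sup_{g:\,\|g\|_{p,v,\Br}\le\rho(r)\ \forall r}\ \int_{\rn}|fg|\,dx \ \ap\ \int_{\I}\|f\|_{p',v^{1-p'},\Brd}\,d\rho(r),
$$
i.e. showing that the pointwise dual pairing can be organised radially in a way that loses only a constant factor. The upper bound is a direct application of H\"older on $\rn$ combined with integration by parts in $r$ (writing $\int_{\rn}|fg| = \int_{\rn}|fg|\int_{|x|}^{\i}d\rho(r)/\rho(|x|)\cdot(\dots)$ type manipulations, or more simply bounding $\|g\|_{p',v^{1-p'}\text{-side}}$ on each complementary ball). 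For the lower bound one must exhibit near-extremal $g$: given a monotone $\rho$, build $g$ supported so that on $\Brd$ it mimics the dual function of $f|_{\Brd}\in L_{p'}(v^{1-p'})$, rescaled to exactly meet the constraint $\|g\|_{p,v,\Bt}=\rho(t)$ at each scale — this is essentially the standard construction behind the reverse Hardy inequality on $\rn$ and is implicit in the proof of Theorem~\ref{T:5.1}, so in fact the most economical write-up is to skip the radial reduction entirely and quote Theorem~\ref{T:5.1} after the substitutions $g\mapsto |h|^{p}$-type change of variables exactly as in the proof of Theorem~\ref{main2}; the associate-space identity then falls out because $\|f\|_{X'} = \|\Id\|_{X\to L_1(\rn,|f|)}$ and $L_1$ trivially has the monotonicity property, so no extra work beyond Theorem~\ref{main2} with $p_1=1$ is needed. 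The only genuine checking left is matching notation: that $v_1^{1/p_1}v_2^{-1/p_2}$ with $p_1=1$, $v_2=v$ becomes $v^{-1/p}$ and that $\|\,\cdot\,\|_{p_1p_2/(p_2-p_1),\Btd}$ with $p_1=1$, $p_2=p$ becomes $\|\,\cdot\,\|_{p',\Btd}$ applied to the remaining factor involving $f$, which is a one-line exponent computation.
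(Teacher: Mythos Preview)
Your proposal is correct and takes essentially the same approach as the paper: recognize that $\|f\|_{X'} = \|\Id\|_{LM_{p\theta,\omega}(\rn,v) \to L_1(\rn,|f|)}$ and apply Theorem~\ref{main2} with $p_1 = 1$, $p_2 = p$, $v_2 = v$, $v_1 = |f|$ (the exponent check $\tfrac{p_1 p_2}{p_2-p_1}=p'$, $\tfrac{p_1\theta}{\theta-p_1}=\theta'$, and $(|f|\,v^{-1/p})^{p'}=|f|^{p'}v^{1-p'}$ then yields exactly the stated expressions). The layer-cake discussion is unnecessary, as you yourself conclude; the paper simply records the theorem as an immediate consequence of Theorem~\ref{main2} with no further argument.
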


\begin{thm}\label{thm.6.5}
Assume $1\leq p<\i$, $0<\theta \leq \i$. Let  $\o\in\Otd$ and $v
\in \W(\rn)$. Set
$$
X=\LMvd.
$$

\rm{(i)} Let $0<\t\leq 1$. Then
$$
\|f\|_{X'}\ap \sup _{t\in (0,\i)}\|f \|_{p',v^{1 - {p'}},\Bt}
\Vert \o\Vert _{L_{\t}(0,t)}^{-1},
$$
with the positive constants in equivalence independent of $f$.

\rm{(ii)} Let $1<\t\leq\i$. Then
$$
\|f\|_{X'}\ap \left(\int_{\I} \|f \|_{{p'},v^{1-{p'}},\Bt}^{\t'}d
\left( \|
\o\|_{{\t},(0,t+)}^{-\t'}\right)\right)^{{1}/{\t'}}+\frac{\|f\|_{{p'},v^{1-{p'}},\rn}}{\|\o\|_{{\t},\I}},
$$
with the positive constants in equivalence independent of $f$.
\end{thm}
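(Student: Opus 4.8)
The plan is to deduce Theorem~\ref{thm.6.5} directly from Theorem~\ref{main200} by recognising the associate norm as an embedding constant into a weighted $L_1$ space. First I would record that, by the definition of $\|\cdot\|_{X'}$ and positive homogeneity,
\[
\|f\|_{X'}=\sup_{g\not\sim 0}\frac{\int_{\rn}|f(x)g(x)|\,dx}{\|g\|_{X}}
=\sup_{g\not\sim 0}\frac{\|g\|_{1,|f|,\rn}}{\|g\|_{X}}
=\|\Id\|_{X\rw L_{1}(\rn,|f|)},
\]
so that $\|f\|_{X'}$ is precisely the best constant of the embedding \eqref{emb4} for the parameter choice $p_1=1$, $p_2=p$, $v_1=|f|$, $v_2=v$. (Since $\o\in\Otd$, any $g$ with $\|g\|_{X}=0$ vanishes a.e., so dropping the restriction $g\not\sim 0$ is harmless, and passing from the supremum over the unit ball of $X$ to the quotient supremum is a matter of homogeneity.) I may assume $|f|\in\W(\rn)$: if $|f|=\i$ on a set of positive measure, then testing against the indicator of a bounded subset of positive measure on which $v$ is bounded --- which lies in $X=\LMvd$ because $\o\in\Otd$ gives $\|\o\|_{\t,(0,T)}<\i$ for every $T>0$ --- shows both sides of the asserted equivalence are $+\i$, while the vanishing of $|f|$ on a set of positive measure is removed by a routine monotone approximation of $|f|$ from above by functions in $\W(\rn)$.

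Next I would apply Theorem~\ref{main200} with $p_1=1$, $p_2=p$, $v_1=|f|$, $v_2=v$; the hypotheses $0<p_1\le p_2<\i$, $v_1,v_2\in\W(\rn)$, $\o\in\Otd$ all hold, and the two cases there match those of Theorem~\ref{thm.6.5}: the range $0<\t\le1$ is the range $0<\t\le p_1=1\le p_2$ of part~(a), and $1<\t\le\i$ is the range $p_1=1<\t\le\i$ of part~(b). It then remains to rewrite the output. Since $\frac{p_1p_2}{p_2-p_1}=\frac{p}{p-1}=p'$ and $v^{-p'/p}=v^{1-p'}$, for every measurable $\Omega\subseteq\rn$ one has
\[
\big\|v_1^{1/p_1}v_2^{-1/p_2}\big\|_{\frac{p_1p_2}{p_2-p_1},\Omega}
=\big\||f|\,v^{-1/p}\big\|_{p',\Omega}
=\|f\|_{p',v^{1-p'},\Omega}
\]
(with the usual reading as $\esup_{\Omega}(|f|/v)$ when $p=1$, so that $p'=\i$ and $v^{1-p'}$ stands for $v^{-1}$). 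Feeding this into part~(a) of Theorem~\ref{main200} gives
\[
\|f\|_{X'}\ap\sup_{t>0}\|\o\|_{\t,(0,t)}^{-1}\,\|f\|_{p',v^{1-p'},\Bt},
\]
which is Theorem~\ref{thm.6.5}(i); feeding it into part~(b), and using in addition $\frac{p_1\t}{\t-p_1}=\t'$ and $\frac{\t-p_1}{p_1\t}=1/\t'$, turns the output into
\[
\Big(\int_{\I}\|f\|_{p',v^{1-p'},\Bt}^{\t'}\,d\big(-\|\o\|_{\t,(0,t+)}^{-\t'}\big)\Big)^{1/\t'}
+\|\o\|_{\t,\I}^{-1}\,\|f\|_{p',v^{1-p'},\rn},
\]
which, read through the Lebesgue--Stieltjes conventions of Section~\ref{sec2} (the function $t\mapsto\|\o\|_{\t,(0,t)}^{-\t'}$ being non-increasing), is the right-hand side in Theorem~\ref{thm.6.5}(ii).

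Since the argument is a reformulation followed by an appeal to Theorem~\ref{main200}, there is no genuine obstacle; the points that need care are the technical reduction to $|f|\in\W(\rn)$ sketched above and, mainly, the bookkeeping --- matching the weight $\big\|v_1^{1/p_1}v_2^{-1/p_2}\big\|_{\frac{p_1p_2}{p_2-p_1},\,\cdot}$ appearing in Theorem~\ref{main200} with $\|f\|_{p',v^{1-p'},\,\cdot}$, handling the endpoint $p=1$, and aligning the sign in the Lebesgue--Stieltjes integrals. The companion Theorem~\ref{thm.6.6} is obtained in exactly the same way, with Theorem~\ref{main2} (hence the operator $H$ and Theorem~\ref{T:5.1}) in place of Theorem~\ref{main200} (and $H^{*}$, Theorem~\ref{T:5.100}).
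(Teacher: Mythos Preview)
Your proposal is correct and follows exactly the approach the paper intends: the paper states explicitly that Theorems~\ref{main2} and~\ref{main200} ``allow us to give a characterization of the associate spaces'' and then states Theorems~\ref{thm.6.6} and~\ref{thm.6.5} without proof, so your reduction $\|f\|_{X'}=\|\Id\|_{X\rw L_1(\rn,|f|)}$ followed by an application of Theorem~\ref{main200} with $p_1=1$, $p_2=p$, $v_1=|f|$, $v_2=v$ is precisely what is meant. Your bookkeeping (the identities $\frac{p_1p_2}{p_2-p_1}=p'$, $\frac{p_1\t}{\t-p_1}=\t'$, $\|\,|f|\,v^{-1/p}\|_{p',\Omega}=\|f\|_{p',v^{1-p'},\Omega}$) is accurate, and you are right to flag the technical reduction to $|f|\in\W(\rn)$ and the sign/Stieltjes conventions as the only points needing care.
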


\begin{bibdiv}
\begin{biblist}

\bib{Bur1}{article}{
   author={Burenkov, V.I.},
   title={Recent progress in studying the boundedness of classical operators
   of real analysis in general Morrey-type spaces. I},
   journal={Eurasian Math. J.},
   volume={3},
   date={2012},
   number={3},
   pages={11--32},
   issn={2077-9879},
   review={\MR{3024128}},
}

\bib{BurGol}{article}{
   author={Burenkov, V.I.},
   author={Goldman, M.L.},
   title={Necessary and sufficient conditions for boundedness of the maximal
   operator from Lebesgue spaces to Morrey-type spaces},
   journal={Math. Inequal. Appl. (accepted)},
   volume={},
   date={},
   number={},
   pages={},
}

\bib{BurHus0}{article}{
   author={Burenkov, V.I.},
   author={Guliyev, H.V.},
   title={Necessary and sufficient conditions for the boundedness of the
   maximal operator in local spaces of Morrey type},
   language={Russian},
   journal={Dokl. Akad. Nauk},
   volume={391},
   date={2003},
   number={5},
   pages={591--594},
   issn={0869-5652},
   review={\MR{2042888 (2004k:42030)}},
}

\bib{BurHus1}{article}{
   author={Burenkov, V. I.},
   author={Guliyev, H. V.},
   title={Necessary and sufficient conditions for boundedness of the maximal
   operator in local Morrey-type spaces},
   journal={Studia Math.},
   volume={163},
   date={2004},
   number={2},
   pages={157--176},
   issn={0039-3223},
   review={\MR{2047377 (2005c:42018)}},
 }

\bib{BurGulHus1}{article}{
   author={Burenkov, V.I.},
   author={Guliyev, H.V.},
   author={Guliyev, V.S.},
   title={On boundedness of the fractional maximal operator from
   complementary Morrey-type spaces to Morrey-type spaces},
   conference={
      title={The interaction of analysis and geometry},
   },
   book={
      series={Contemp. Math.},
      volume={424},
      publisher={Amer. Math. Soc.},
      place={Providence, RI},
   },
   date={2007},
   pages={17--32},
   review={\MR{2316329 (2008d:42004)}},
}

\bib{BurGulSerTar}{article}{
   author={Burenkov, V.I.},
   author={Guliyev, V.S.},
   author={Serbetci, A.},
   author={Tararykova, T.V.},
   title={Necessary and sufficient conditions for the boundedness of genuine
   singular integral operators in local Morrey-type spaces},
   journal={Eurasian Math. J.},
   volume={1},
   date={2010},
   number={1},
   pages={32--53},
   issn={2077-9879},
   review={\MR{2898674}},
}

\bib{BGGM}{article}{
   author={Burenkov, V.I.},
   author={Gogatishvili, A.},
   author={Guliyev, V.S.},
   author={Mustafayev, R.Ch.},
   title={Boundedness of the fractional maximal operator in local
   Morrey-type spaces},
   journal={Complex Var. Elliptic Equ.},
   volume={55},
   date={2010},
   number={8-10},
   pages={739--758},
   issn={1747-6933},
   review={\MR{2674862 (2011f:42015)}},
}

\bib{BGGM1}{article}{
   author={Burenkov, V.I.},
   author={Gogatishvili, A.,}
   author={Guliyev, V.S.},
   author={Mustafayev, R.Ch.},
   title={Boundedness of the Riesz potential in local Morrey-type spaces},
   journal={Potential Anal.},
   volume={35},
   date={2011},
   number={1},
   pages={67--87},
   issn={0926-2601},
   review={\MR{2804553 (2012d:42027)}},
}

\bib{BurJainTar}{article}{
   author={Burenkov, V.I.},
   author={Jain, P.},
   author={Tararykova, T.V.},
   title={On boundedness of the Hardy operator in Morrey-type spaces},
   journal={Eurasian Math. J.},
   volume={2},
   date={2011},
   number={1},
   pages={52--80},
   issn={2077-9879},
   review={\MR{2910821}},
}

\bib{BurNur}{article}{
   author={Burenkov, V.I.},
   author={Nursultanov, E.D.},
   title={Description of interpolation spaces for local Morrey-type spaces},
   language={Russian, with Russian summary},
   journal={Tr. Mat. Inst. Steklova},
   volume={269},
   date={2010},
   number={Teoriya Funktsii i Differentsialnye Uravneniya},
   pages={52--62},
   issn={0371-9685},
   translation={
      journal={Proc. Steklov Inst. Math.},
      volume={269},
      date={2010},
      number={1},
      pages={46--56},
      issn={0081-5438},
   },
   review={\MR{2729972 (2011g:46034)}},
}

\bib{ChristGraf}{article}{
   author={Christ, M.},
   author={Grafakos, L.},
   title={Best constants for two nonconvolution inequalities},
   journal={Proc. Amer. Math. Soc.},
   volume={123},
   date={1995},
   number={6},
   pages={1687--1693},
   issn={0002-9939},
   review={\MR{1239796 (95g:42031)}},
}

\bib{DrabHeinKuf}{article}{
   author={Dr{\'a}bek, P.},
   author={Heinig, H.P.},
   author={Kufner, A.},
   title={Higher-dimensional Hardy inequality},
   conference={
      title={General inequalities, 7},
      address={Oberwolfach},
      date={1995},
   },
   book={
      series={Internat. Ser. Numer. Math.},
      volume={123},
      publisher={Birkh\"auser},
      place={Basel},
   },
   date={1997},
   pages={3--16},
   review={\MR{1457264 (98k:26026)}},
}

\bib{GogMus1}{article}{
   author={Gogatishvili, A.},
   author={Mustafayev, R.Ch.},
   title={Dual spaces of local Morrey-type spaces},
   journal={Czechoslovak Math. J.},
   volume={61(136)},
   date={2011},
   number={3},
   pages={609--622},
   issn={0011-4642},
   review={\MR{2853078}},
 }

\bib{GogMus2}{article}{
   author={Gogatishvili, A.},
   author={Mustafayev, R. Ch.},
   title={The multidimensional reverse Hardy inequalities},
   journal={Math. Inequal. Appl.},
   volume={15},
   date={2012},
   number={1},
   pages={1--14},
   issn={1331-4343},
   review={\MR{2919426}},
   doi={10.7153/mia-15-01},
}

\bib{GogMus3}{article}{
   author={Gogatishvili, A.},
   author={Mustafayev, R.Ch.},
   title={New pre-dual space of Morrey space},
   journal={J. Math. Anal. Appl.},
   volume={397},
   date={2013},
   number={2},
   pages={678--692},
   issn={0022-247X},
   review={\MR{2979604}},
}

\bib{Gul}{book}{
   author={Guliyev, V.S.},
   title={Function spaces, integral operators and two weighted inequalities on homogeneous groups. Some applications},
   publisher={Elm},
   place={Baku},
   language={Russian},
   date={1999},
}

\bib{GulMus0}{article}{
   author={Guliev, V.S.},
   author={Mustafaev, R.Ch.},
   title={Integral operators of potential type in spaces of homogeneous
   type},
   language={Russian},
   journal={Dokl. Akad. Nauk},
   volume={354},
   date={1997},
   number={6},
   pages={730--732},
   issn={0869-5652},
   review={\MR{1473130}},
}

\bib{GulMus1}{article}{
   author={Guliev, V. S.},
   author={Mustafaev, R. Ch.},
   title={Fractional integrals in spaces of functions defined on spaces of
   homogeneous type},
   language={Russian, with English and Russian summaries},
   journal={Anal. Math.},
   volume={24},
   date={1998},
   number={3},
   pages={181--200},
   issn={0133-3852},
   review={\MR{1639211 (99m:26007)}},
}

\bib{kp}{book}{
   author={Kufner, A.},
   author={Persson, L.-E.},
   title={Weighted inequalities of Hardy type},
   publisher={World Scientific Publishing Co. Inc.},
   place={River Edge, NJ},
   date={2003},
   pages={xviii+357},
   isbn={981-238-195-3},
   review={\MR{1982932 (2004c:42034)}},
}

\bib{Mucken}{article}{
   author={Muckenhoupt, B.},
   title={Weighted norm inequalities for the Hardy maximal function},
   journal={Trans. Amer. Math. Soc.},
   volume={165},
   date={1972},
   pages={207--226},
   issn={0002-9947},
   review={\MR{0293384 (45 \#2461)}},
}

\bib{ok}{book}{
   author={Opic, B.},
   author={Kufner, A.},
   title={Hardy-type inequalities},
   series={Pitman Research Notes in Mathematics Series},
   volume={219},
   publisher={Longman Scientific \& Technical},
   place={Harlow},
   date={1990},
   pages={xii+333},
   isbn={0-582-05198-3},
   review={\MR{1069756 (92b:26028)}},
}

\bib{r}{book}{
   author={Rudin, W.},
   title={Principles of mathematical analysis},
   series={Second edition},
   publisher={McGraw-Hill Book Co.},
   place={New York},
   date={1964},
   pages={ix+270},
   review={\MR{0166310 (29 \#3587)}},
}

\end{biblist}
\end{bibdiv}

\

Rza Mustafayev\\
Department of Mathematics, Faculty of Science and Arts, Kirikkale
University, 71450 Yahsihan, Kirikkale, Turkey\\
E-mail: rzamustafayev@gmail.com\\

Tugce {\"U}nver \\
Department of Mathematics, Faculty of Science and Arts, Kirikkale
University, 71450 Yahsihan, Kirikkale, Turkey\\
E-mail: tugceunver@gmail.com \\

\end{document}